\crefname{equation}{}{}
\crefname{lemma}{Lemma}{Lemmas}
\crefname{theorem}{Theorem}{Theorems}
\crefname{discr}{Discretization}{Discretizations}
\apptocmd{\sloppy}{\hbadness 10000\relax}{}{}
\newcommand{\nm}[1]{\lVert {#1} \rVert}
\newcommand{\nmB}[1]{\Big\Vert {#1} \Big\Vert}
\newcommand{\ssnm}[1]
{
  \left\vert\kern-0.25ex
  \left\vert\kern-0.25ex
  \left\vert
  {#1}
  \right\vert\kern-0.25ex
  \right\vert\kern-0.25ex
  \right\vert
}
\newcommand{\ssnmb}[1]
{
  \big\vert\kern-0.25ex
  \big\vert\kern-0.25ex
  \big\vert
  {#1}
  \big\vert\kern-0.25ex
  \big\vert\kern-0.25ex
  \big\vert
}
\newcommand{\ssnmB}[1]
{
  \Big\vert\kern-0.25ex
  \Big\vert\kern-0.25ex
  \Big\vert
  {#1}
  \Big\vert\kern-0.25ex
  \Big\vert\kern-0.25ex
  \Big\vert
}
\def\spher@harm#1{%
  \vbox{\hbox{%
    \offinterlineskip
    \valign{&\hb@xt@2\p@{\hss$##$\hss}\vskip.2ex\cr#1\crcr}%
  }\vskip-.36ex}%
}
\def\gshone{\spher@harm{.}}
\def\gshtwo{\spher@harm{.&.}}
\def\gshthree{\spher@harm{.&.&.}}
\let\gsh\spher@harm
\newtheorem{hypothesis}{Hypothesis}[section]
\newtheorem{lemma}{Lemma}[section]
\newtheorem{remark}{Remark}[section]
\newtheorem{theorem}{Theorem}[section]
\def\@captype{table}\makeatother
\begin{document}


\title{Temporal semi-discretizations of a backward semilinear stochastic evolution equation
\thanks{This work was supported in part by the National Natural
Science Foundation of China (11901410, 11771312) and
the Fundamental Research Funds for the Central Universities
in China (2020SCU12063).}}

\author{Binjie Li\thanks{libinjie@scu.edu.cn}}
\author{Xiaoping Xie\thanks{Corresponding author: xpxie@scu.edu.cn}}
\affil{School of Mathematics, Sichuan University, Chengdu 610064, China}

\date{}
\maketitle

\begin{abstract} 
  This paper studies the convergence of three temporal semi-discretizations for
  a backward semilinear stochastic evolution equation. For general terminal
  value and general coefficient with Lipschitz continuity, the convergence of
  the first two temporal semi-discretizations is established, and an explicit
  convergence rate is derived for the third temporal semi-discretization. The
  third temporal semi-discretization is applied to a general stochastic linear
  quadratic control problem, and the convergence of a temporally semi-discrete
  approximation to the optimal control is established.
\end{abstract}

\medskip{\bf Keywords}.
backward semilinear stochastic evolution equation,
Brownian motion, discretization, stochastic linear quadratic control

\medskip{\bf AMS subject classifications}.
  49M25, 65C30, 60H35, 65K10

\section{Introduction}
In the literature, Bismut~\cite{Bismut1973} first introduced the finite
dimensional linear backward stochastic differential equations (BSDEs, for short)
to study the stochastic optimal control problems. Later, Pardoux and Peng
\cite{Pardoux_peng1990} studied the general finite dimensional BSDEs with
Lipschitz nonlinearity, and Hu and Peng \cite{Hu_Peng_1991} established the
well-posedness for the backward semilinear stochastic evolution equations with
Lipschitz nonlinearity. Since then a considerable number of papers have been
published for the applications of the BSDEs to stochastic optimal control,
partial differential equations and mathematical finance; see
\cite{Karoui1997,Ma_Yong1999,Pardoux2014,Peng1993,Yong1999} and the references cited therein.
We particularly refer the reader to \cite{Fuhrman2012,Fuhrman2013, Fuhrman2016,
Fuhrman2002,Fuhrman2004, Guatteri2011, Guatteri2013,Guatteri2005, Guatteri2014}
and the references  therein for the applications of the backward stochastic
partial differential equations to the stochastic optimal control problems.


By now, the numerical solutions of the finite-dimensional BSDEs have been
extensively studied. We particularly introduce several works as follows. For
backward-forward SDEs, Ma et al.~\cite{Ma1994} proposed a four-step scheme,
Zhang ~\cite{Zhang2004} and Bouchard and Touzi~\cite{Bouchard2004} analyzed two
Euler type schemes, and Chassagneux~\cite{Chassagneux2014} studied a class of
linear multistep methods. The above four works all require that the coefficients
are deterministic. For a class of nonlinear BSDEs with particular terminal value
and sufficiently smooth deterministic coefficients, Zhao et al.~\cite{Zhao2010}
proposed a stable multistep scheme. For the nonlinear BSDEs with general
terminal value and general coefficients, Hu et al.~\cite{Hu_Dualart_2011}
analyzed three schemes with some restrictions on the regularity of the
underlying solution, and these restrictions might be difficult to verify. We
also refer the reader to the references cited in the above papers for more
related works. Additionally, because of the close connections between the
stochastic evolution equations and the backward stochastic evolution equations,
we refer the reader to
\cite{Beccari2019,Cao2017,Cui_Hong_2019,Du2002,Hutzenthaler2015,
Hutzenthaler_Jentzen2020,Jentzen2009,Jentzen2015,Krusebook2014,Yan2005} and
the references   therein, for the numerical analysis of the stochastic
evolution equations.





Compared with the numerical analysis of the finite-dimensional BSDEs, the
numerical analysis of the backward stochastic semilinear evolution equations is
very limited. Wang~\cite{WangY2016} analyzed a discretization for a backward
semilinear stochastic parabolic equation; since this discretization uses the
eigenvectors of the Laplace operator, its application appears to be limited.
Recently, Li and Xie~\cite{LiXie2021} analyzed a spatial semi-discretization for
a backward semilinear stochastic parabolic equation with general filtration,
using the standard piecewise linear finite element method. To our best
knowledge, no numerical analysis of temporal semi-discretizations is available
for a backward semilinear stochastic evolution equation in an
infinite-dimensional Hilbert space.

The immaturity of the numerical analysis of the backward semilinear stochastic
evolution equations motivates us to study the temporal semi-discretizations for
the  equation
\begin{equation}
  \label{eq:model}
  \begin{cases}
    \mathrm{d}p(t) = -( A p(t) + f(t,p(t),z(t)) ) \, \mathrm{d}t +
    z(t) \, \mathrm{d}W(t), \quad 0 \leqslant t \leqslant T, \\
    p(T) = p_T,
  \end{cases}
\end{equation}
where $ 0 < T < \infty $, $ W(\cdot) $ is a one-dimensional real Brownian
motion, and $ p_T $ and $ f $ are given. One key difficulty in the numerical
analysis of the backward semilinear stochastic evolution equation
\cref{eq:model} is that the process $ z $ is generally of low temporal
regularity.
In this paper, we analyze three Euler type temporal semi-discretizations for
equation \cref{eq:model}. For the first two semi-discretizations, the process $
z $ is discretized by the piecewise constant processes, and we prove that the two
semi-discretizations are convergent. More precisely, we obtain the error bound
\[
  c(\tau^{1/2} + \ssnm{z - \mathcal P_\tau z}_{L^2(0,T;H)}),
\]
where $ \mathcal P_\tau z $ is the optimal approximation of $ z $ in the space
of piecewise constant processes.
Hence, if the process $ z $ indeed possesses higher temporal regularity,
then an explicit convergence rate will readily be derived. For the third
semi-discretization, the process $ z $ is not discretized, and an explicit
convergence rate is derived.
Finally, we apply the third semi-discretization to a general stochastic linear
quadratic control problem, and establish the convergence of a temporally
semi-discrete approximation, with reasonable regularity assumption on the data.
To sum up, our  main contributions lie in the following  aspects.
\begin{itemize}
  \item This work, to our best knowledge, provides the first numerical analysis of temporal semi-discretizations for an infinite-dimensional
    semilinear BSDE.

  \item Our   analysis, compared with most of the numerical analysis of the finite-dimensional BSDEs,  neither requires  the terminal value to be  generated by a forward stochastic evolution equation, nor requires the coefficient to be deterministic. In addition, it    requires only some reasonable regularity assumptions on the data, and imposes no regularity restriction on the solution.

  \item In the literature, the numerical analysis of the stochastic optimal
    control problems governed by the SPDEs is very limited; see
    \cite{Dunst2016,LiXie2021,Wang2020a,Wang2020b,ZhouLi2021}. Our analysis
    for the temporal semi-discretization of the general stochastic linear
    quadratic control problem, as far as we know, appears to be the first
    numerical analysis of such kinds of problems where the noise is
    multiplicative and the diffusion term contains the control variable.
\end{itemize}



The rest of this paper is organized as follows. \Cref{sec:pre} introduces some
preliminaries. \Cref{sec:semidiscretization} gives three temporal
semi-discretizations and their error estimates. \Cref{sec:application}
applies the third temporal semi-discretization to a stochastic linear
quadratic control problem. Finally, \Cref{sec:conclusion} concludes this
paper.

\section{Preliminaries}
\label{sec:pre}
Let $ (\Omega, \mathcal F, \mathbb P) $ be a given complete probability space,
on which a one-dimensional Brownian motion $ W(\cdot) $ is defined. Let $
\mathbb F := \{\mathcal F_t \mid t \geqslant 0\} $ be the filtration generated
by $ W(\cdot) $ and augmented by the $ \mathbb P $-null sets of $ \mathcal F $.
We use $ \mathbb E $ to denote the expectation and use $ \mathbb E_t $ to denote
the conditional expectation with respect to $ \mathcal F_t $ for each $ t
\geqslant 0 $. For any separable Hilbert space $ X $ with norm $ \nm{\cdot}_X $,
we write the Hilbert space $ L^2(\Omega, \mathcal F_T, \mathbb P; X) $ as $
L^2(\Omega;X) $, and use $ \ssnm{\cdot}_X $ to denote its norm. Moreover, define
\begin{align*} 
  L_\mathbb F^2(0,T;X) &:= \Big\{
    \varphi :[0,T] \times \Omega \to X
     \mid \varphi \text{ is $ \mathbb F
    $-progressively measurable } \\
     & \qquad\qquad\qquad\qquad\qquad\qquad
     \text{ and } \int_0^T \ssnm{\varphi(t)}_X^2 \, \mathrm{d}t
    < \infty
  \Big\},
\end{align*}
and let $ L_\mathbb F^2(\Omega;C([0,T];X)) $ be the space of all $ \mathbb F
$-progressively measurable processes $ \varphi $ with continuous paths in $ X $
such that
\[
  \ssnm{\varphi}_{C([0,T];X)} := \Big(
    \mathbb E \sup_{t \in [0,T]}
    \nm{\varphi(t)}_X^2
  \Big)^{1/2} < \infty.
\]
The space $ L_\mathbb F^2(\Omega;C([0,T];X)) $ is a Banach space with respect to
the above norm $ \ssnm{\cdot}_{C([0,T];X)} $.

Let $ H $ be a real separable Hilbert space with inner product $ (\cdot, \cdot)
_{H} $. Assume that
\[
  A: \mathrm{Domain}(A) \subset H \to H
\]
is a linear operator satisfying the following properties:
\begin{itemize}
  \item $ A $ is self-adjoint, i.e.,
    \[
      (Av, w)_{H}  = (v, Aw)_{H}  \quad \text{for all}
      \quad v, w \in \mathrm{Domain}(A);
    \]
  \item $ A $ is surjective, and there exists a positive constant $ \delta $ such that
    \[
      (-Av,v)_{H} \geqslant \delta \nm{v}_{H}^2
      \quad \text{for all} \quad v \in
      \mathrm{Domain}(A);
    \]
  \item $ \mathrm{Domain}(A) $ is dense in $ H $, and $ \mathrm{Domain}
    (A) $, equipped with the norm $ \nm{A \cdot}_{H} $, is compactly
    embedded into $ H $.
\end{itemize}
It is evident that $ A $ will generate an analytic contractive semigroup $
\{e^{tA} \mid t
\geqslant 0\} $ on $ H $. For each $ 0 \leqslant \gamma \leqslant 1 $, define
\[
  H^\gamma := \{
    (-A)^{-\gamma} v \mid  v \in H
  \}
\]
and endow this space with the norm
\[
  \nm{v}_{H^\gamma} := \nm{(-A)^\gamma v}_{H}
  \quad \forall v \in H^\gamma.
\]
In the sequel, we will use $ [\cdot,\cdot] $ to denote the usual inner product
of the Hilbert space $ L^2(\Omega; H) $.

For any two Banach spaces $ B_1 $ and $ B_2 $, $ \mathcal L(B_1,B_2) $ is the
space of all bounded linear operators from $ B_1 $ to $ B_2 $, and $ \mathcal
L(B_1, B_1) $ is abbreviated to $ \mathcal L(B_1) $. We denote by $ I $ the
identity mapping.

Finally, for the data $ f $ and $ p_T $ in equation \cref{eq:model}, we make the
following assumptions.

\begin{hypothesis} 
  \label{hypo:main}
  We assume that (i)-(iv) hold:
  \end{hypothesis}
  \begin{enumerate}[(i)]
    \item The functional $ f: [0,T] \times \Omega \times H \times
      H \to H $ satisfies that
      \[
        f(\cdot, v, w) \in L_\mathbb F^2(0, T; H)
        \quad \text{for all} \quad v, w \in H.
      \]
    \item There exists a positive constant $ C_L $ such that, $ \mathbb P
      $ almost surely for almost every $ t \in [0,T] $,
      \begin{equation} 
        \label{eq:f-Lips}
        \nm{f(t,p_1,z_1) - f(t,p_2,z_2)}_{H} \leqslant
        C_L (\nm{p_1-p_2}_{H} + \nm{z_1-z_2}_{H})
      \end{equation}
      for all $ p_1, p_2, z_1, z_2 \in H $.
    \item $ p_T \in L^2(\Omega; H^{1/2}) $.
  \end{enumerate}

  Under the above hypothesis, equation \cref{eq:model} admits a unique mild
  solution $ (p,z) $, and
  \begin{equation} 
    \label{eq:regu}
    (p,z) \in \big(
      L_\mathbb F^2(\Omega;C([0,T]; H^{1/2})) \cap
      L_\mathbb F^2(0,T;H^{1})
    \big) \times L_\mathbb F^2(0,T;H^{1/2}).
  \end{equation}
\begin{remark} 
  By \cite[Theorem 3.1]{Hu_Peng_1991}, \cite[Theorem 4.4]{Guatteri2005}, and the
  basic theory of BSDEs (see, e.g., \cite[Chapter 5]{Pardoux2014}), the proof of
  \cref{eq:regu} is straightforward; see also \cite[Theorem 4.10]{LuZhangbook2021}.
\end{remark}

\section{Three temporal semi-discretizations}
\label{sec:semidiscretization}
Let $ J $ be a positive integer and define $ t_j := j \tau $ for each
$ 0 \leqslant j \leqslant J $, where $ \tau := T/J $. Define
\begin{align*} 
  \mathcal X_\tau := \big\{
    V: [0,T] \times \Omega \to H
    &
    \mid
    V(t_j) \in L^2(\Omega, \mathcal F_{t_j}, \mathbb P; H)
    \text{ and $ V $ is constant} \\
    &
    \qquad \text{ on }  [t_j,t_{j+1}) \text{ for each $ 0 \leqslant j < J $}
  \big\}.
\end{align*}
For any $ V \in \mathcal X_\tau $, we denote $ V(t_j) $, $ 0 \leqslant j
\leqslant J $, by $ V_j $ for convenience. For each $ 0 \leqslant j < J $,
define $ \delta W_j := W(t_{j+1}) - W(t_j) $, and define $ \mathcal I_\tau^j:
L^2(\Omega; H) \to L^2(\Omega, \mathcal F_{t_j}, \mathbb P; H) $ by
\begin{equation} 
  \label{eq:Itauj-def}
  \mathcal I_\tau^j v := \frac1\tau
  \mathbb E_{t_j}(v \delta W_j) \quad
  \forall v \in L^2(\Omega;H).
\end{equation}
We also let $ \mathcal P_\tau $ be the $ L^2(\Omega;L^2(0,T;H)) $-orthogonal
projection onto $ \mathcal X_\tau $; more concretely, for any $ v \in L^2(\Omega;
L^2(0,T;H)) $,
\[ 
  (\mathcal P_\tau v)(t) := \frac1\tau
  \mathbb E_{t_j} \int_{t_j}^{t_{j+1}} v(s) \, \mathrm{d}s
\]
for all $ t_j \leqslant t < t_{j+1} $ with $ 0 \leqslant j < J $. In the rest of
this paper, $ c $ denotes a generic positive constant, independent of $ \tau $,
and its value may differ in different places.

Now we present three temporal semi-discretizations of equation \cref{eq:model}.
The first semi-discretization seeks $ (P, Z) \in \mathcal X_\tau \times \mathcal
X_\tau $ by
\begin{subequations}
\label{eq:discretization1}
\begin{numcases}{}
  P_J = p_T, \label{eq:P-Z-PJ} \\
  Z_j = \mathcal I_\tau^j \Big(
    P_{j+1} + \int_{t_j}^{t_{j+1}}
    f(t,P_{j+1},Z_j) \, \mathrm{d}t
  \Big), \quad 0 \leqslant j < J, \label{eq:P-Z-Z} \\
  P_j - \mathbb E_{t_j} P_{j+1} = \tau A P_j +
  \mathbb E_{t_j} \int_{t_j}^{t_{j+1}}
  f(t,P_{j+1},Z_j) \, \mathrm{d}t,
  \quad 0 \leqslant j < J.
  \label{eq:P-Z-P}
\end{numcases}
\end{subequations}
The second semi-discretization seeks $ (P, Z) \in \mathcal X_\tau \times
\mathcal X_\tau $ by
\begin{subequations}
  \label{eq:discretization2}
  \begin{numcases}{}
    P_J = p_T, \\ 
    Z_j = \mathcal I_\tau^j P_{j+1}, \quad 0 \leqslant j < J, \\ 
    P_j - \mathbb E_{t_j} P_{j+1} = \tau A P_j +
    \mathbb E_{t_j} \int_{t_j}^{t_{j+1}}  f(t,P_{j+1},Z_j) \, \mathrm{d}t,
    \,\,\, 0 \leqslant j < J.
  \end{numcases}
\end{subequations}
The third semi-discretization seeks $ (P, Z) \in \mathcal X_\tau \times
L_\mathbb F^2(0,T;H) $ by
\begin{subequations}
  \label{eq:discretization3}
  \begin{numcases}{}
    P_J = p_T, \\ 
    P_j \!-\! P_{j+1} \!=\! \tau A P_j \!+\!
    \int_{t_j}^{t_{j+1}} \! f(t,P_{j+1},Z(t)) \mathrm{d}t \!-\!
    \int_{t_j}^{t_{j+1}} \! Z(t) \, \mathrm{d}W(t),
    \,\,\, 0 \leqslant j < J.
  \end{numcases}
\end{subequations}

The main results of this section are the following three theorems.
\begin{theorem} 
  \label{thm:conv1}
  Assume that \cref{hypo:main} holds and $ \tau < 1/C_\text{L}^2 $. Let $ (p,z)
  $ and $ (P,Z) $ be the solutions of \cref{eq:model,eq:discretization1},
  respectively. Then
  \begin{equation}
    \label{eq:conv}
    \begin{aligned}
      & \max_{0 \leqslant j < J}
      \ssnm{p(t_j) - P_j}_{H} +
      \ssnm{z-Z}_{L^2(0,T;H)} \\
      \leqslant{} &
      c \big(
        \tau^{1/2} +
        \ssnm{(I - \mathcal P_\tau) z}_{L^2(0,T;H)}
      \big).
    \end{aligned}
  \end{equation}
\end{theorem}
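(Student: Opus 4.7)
\textbf{Plan for \cref{thm:conv1}.} The strategy is to derive backward error recursions for $e_j^P := p(t_j) - P_j$ and $e_j^Z := (\mathcal P_\tau z)(t_j) - Z_j$, test the $P$-recursion against $e_j^P$ in $H$, telescope the resulting energy identity, and close by a backward discrete Gronwall argument together with the contraction condition $\tau < 1/C_L^2$.

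First I would integrate \cref{eq:model} on $[t_j, t_{j+1}]$ and apply $\mathbb E_{t_j}$ to kill the Itô integral, obtaining
\[
  p(t_j) = \mathbb E_{t_j} p(t_{j+1}) + \mathbb E_{t_j}\!\int_{t_j}^{t_{j+1}}\![A p(s) + f(s, p(s), z(s))]\, \mathrm d s.
\]
Subtracting \cref{eq:P-Z-P} produces a backward recursion for $e_j^P$ with a consistency residual $R_j^A := \mathbb E_{t_j}\int_{t_j}^{t_{j+1}} A[p(s) - p(t_j)]\, \mathrm d s$ and a Lipschitz perturbation from $f(\cdot, p, z) - f(\cdot, P_{j+1}, Z_j)$. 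For the $Z$-recursion I would exploit the conditional Itô isometry $\mathbb E_{t_j}\bigl[(\int_{t_j}^{t_{j+1}} z\, \mathrm d W)\delta W_j\bigr] = \mathbb E_{t_j}\int_{t_j}^{t_{j+1}} z(s)\, \mathrm d s$ to rewrite $\tau (\mathcal P_\tau z)(t_j)$ as $\mathbb E_{t_j}[p(t_{j+1})\delta W_j]$ plus a drift contribution, then subtract \cref{eq:P-Z-Z} to obtain the companion identity for $e_j^Z$.

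Next I would take the $H$-inner product of the $P$-error recursion with $e_j^P$, use the algebraic identity $2(e_j^P, e_j^P - \mathbb E_{t_j} e_{j+1}^P) = \nm{e_j^P}_H^2 - \nm{\mathbb E_{t_j} e_{j+1}^P}_H^2 + \nm{e_j^P - \mathbb E_{t_j} e_{j+1}^P}_H^2$, and take expectations. Combining with the Pythagorean decomposition $\mathbb E\nm{e_{j+1}^P}_H^2 = \mathbb E\nm{\mathbb E_{t_j} e_{j+1}^P}_H^2 + \mathbb E\nm{e_{j+1}^P - \mathbb E_{t_j} e_{j+1}^P}_H^2$ and the coercivity $(-Av, v)_H = \nm{v}_{H^{1/2}}^2$, telescoping from $j = J$ (where $e_J^P = 0$) down to $j$ yields
\[
  \mathbb E\nm{e_j^P}_H^2 + 2\tau \sum_{k=j}^{J-1} \mathbb E\nm{e_k^P}_{H^{1/2}}^2 + \sum_{k=j}^{J-1}\mathbb E\nm{e_{k+1}^P - \mathbb E_{t_k}e_{k+1}^P}_H^2 \leqslant \text{(residuals)}.
\]
The martingale increments, through the $Z$-error identity, are how $\tau \sum_k \mathbb E\nm{e_k^Z}_H^2$ enters the left-hand side.

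To close, I would use \cref{eq:f-Lips} to split the $f$-difference into $\nm{p(s) - p(t_{j+1})}_H + \nm{e_{j+1}^P}_H + \nm{z(s) - (\mathcal P_\tau z)(t_j)}_H + \nm{e_j^Z}_H$, which is precisely where $\ssnm{(I - \mathcal P_\tau) z}_{L^2(0,T;H)}$ surfaces. The $A$-residual is controlled by duality, $\snm{(R_j^A, e_j^P)_H} \leqslant \int_{t_j}^{t_{j+1}} \nm{p(s) - p(t_j)}_{H^{1/2}} \nm{e_j^P}_{H^{1/2}}\, \mathrm d s$, so half of the $H^{1/2}$ coercivity absorbs the $e_j^P$ factor via Young, while $\tau < 1/C_L^2$ absorbs the $\tau C_L^2 \mathbb E\nm{e_j^Z}_H^2$ contributions; a backward discrete Gronwall inequality then delivers \cref{eq:conv}. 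I expect the main technical obstacle to be showing $\sum_j \mathbb E\int_{t_j}^{t_{j+1}} \nm{p(s) - p(t_j)}_{H^{1/2}}^2\, \mathrm d s = O(\tau)$ under only the natural regularity \cref{eq:regu}, since $Ap$ need not lie in $H^{1/2}$; this will likely require the mild representation of $p$ combined with analytic-semigroup smoothing and the $L^2(0,T;H^{1/2})$-integrability of $z$ to compensate for the missing strong time derivative.
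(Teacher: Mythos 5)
Your plan follows the classical energy/variational route for BSDE time discretizations (derive an error recursion, test against $e_j^P$, telescope, backward Gronwall), which is genuinely different from the paper's argument. The paper never forms an error recursion for $p(t_j)-P_j$ at all: it (i) introduces an auxiliary pair $(\widetilde P,\widetilde Z)$ solving the \emph{linear} scheme with the frozen source $f(\cdot,p(\cdot),z(\cdot))$, (ii) estimates $p(t_j)-\widetilde P_j$ by comparing the mild representation \cref{eq:lin-mild-form} with its discrete counterpart through the operator-norm bounds \cref{eq:I-tauA-conv} and \cref{lem:auxi-1}, (iii) estimates $z-\widetilde Z$ via the orthogonality properties \cref{eq:Itauj-pro-1}--\cref{eq:Itauj-pro-4} combined with the crude bound $\sum_j\bigl(\int_{t_j}^{t_{j+1}}\ssnm{p}_{H^1}\,\mathrm dt\bigr)^2\leqslant\tau\ssnm{p}_{L^2(0,T;H^1)}^2$ of \cref{lem:Ap+g}, and (iv) closes with a stability estimate for $(P-\widetilde P, Z-\widetilde Z)$ through a bootstrap over subintervals on which $c^*\sqrt{T-t_j}\leqslant 1/2$.

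This difference matters because your route hinges on precisely the step you flag but do not establish, and that step is the crux rather than a technicality: you need $\sum_j\mathbb E\int_{t_j}^{t_{j+1}}\nm{p(s)-p(t_j)}_{H^{1/2}}^2\,\mathrm ds=O(\tau)$ in order to absorb the residual $\mathbb E_{t_j}\int_{t_j}^{t_{j+1}}A(p(s)-p(t_j))\,\mathrm ds$ into the coercivity term $\tau\ssnm{e_j^P}_{H^{1/2}}^2$. Under the available regularity \cref{eq:regu} this is not routine. Writing $p(s)-p(t_j)=(I-e^{(s-t_j)A})p(s)-\int_{t_j}^s e^{(r-t_j)A}g(r)\,\mathrm dr+\int_{t_j}^s e^{(r-t_j)A}z(r)\,\mathrm dW(r)$ via \cref{eq:p-z-int} with $g=f(\cdot,p,z)\in L_{\mathbb F}^2(0,T;H)$, the first and third terms are indeed $O(\tau)$ after summation (using $p\in L_{\mathbb F}^2(0,T;H^1)$ and $z\in L_{\mathbb F}^2(0,T;H^{1/2})$), but the drift term only satisfies $\nm{e^{(r-t_j)A}g(r)}_{H^{1/2}}\leqslant c(r-t_j)^{-1/2}\nm{g(r)}_{H}$, a borderline nonintegrable singularity against data that is merely $L^2$ in time; the obvious estimates produce quantities of the form $\int_{t_j}^{t_{j+1}}(t_{j+1}-r)^2(r-t_j)^{-1}\,\mathrm dr$ and lose a logarithm. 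So you must either supply a sharper ($\ell^2$-in-time, maximal-regularity type) argument for this $H^{1/2}$ modulus of continuity, or restructure the consistency error so that $A$ is only ever measured through $\mathcal L(H)$-norms of $e^{(t-t_j)A}-(I-\tau A)^{-(k-j+1)}$ and through $\sum_j\bigl(\int_{t_j}^{t_{j+1}}\ssnm{p}_{H^1}\,\mathrm dt\bigr)^2$ --- which is exactly what the paper's mild-solution route accomplishes. The remaining ingredients of your plan (the conditional It\^o isometry for the $Z$-error, the absorption of $\tau C_L^2\ssnm{e_j^Z}_H^2$ under $\tau<1/C_L^2$, the backward Gronwall) are standard and would go through once that consistency estimate is secured; as written, the proof is incomplete at its central point.
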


\begin{theorem} 
  \label{thm:conv2}
  Assume that \cref{hypo:main} holds. Let $ (p,z) $ and $ (P,Z) $ be the
  solutions of \cref{eq:model,eq:discretization2}, respectively. Then the error
  estimate \cref{eq:conv} still holds.
\end{theorem}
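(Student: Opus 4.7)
The natural plan is to follow the same energy-based strategy as in the proof of \cref{thm:conv1}, exploiting the fact that the $Z$-update in \cref{eq:discretization2} is explicit in $Z_j$ (rather than implicit as in \cref{eq:discretization1}), which is precisely why the restriction $\tau < 1/C_L^2$ is no longer needed. A clean route is to bootstrap on \cref{thm:conv1}: show that the solutions $(\widetilde P, \widetilde Z)$ of \cref{eq:discretization1} and $(P,Z)$ of \cref{eq:discretization2} differ by $O(\tau^{1/2})$ when $\tau < 1/C_L^2$, and then conclude by the triangle inequality with \cref{thm:conv1}; for $\tau \geqslant 1/C_L^2$ the right-hand side of \cref{eq:conv} is bounded below by a positive constant while the left-hand side is uniformly bounded, so the estimate is trivial upon enlarging $c$.

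Setting $\delta P_j := \widetilde P_j - P_j$ and $\delta Z_j := \widetilde Z_j - Z_j$, the differences satisfy $\delta P_J = 0$, a $P$-equation of the same form as \cref{eq:discretization2} driven by the Lipschitz perturbation $f(\cdot,\widetilde P_{j+1},\widetilde Z_j) - f(\cdot,P_{j+1},Z_j)$, and
\[
\delta Z_j = \mathcal I_\tau^j \delta P_{j+1} + r_j, \qquad r_j := \mathcal I_\tau^j \!\int_{t_j}^{t_{j+1}} \!f(t,\widetilde P_{j+1},\widetilde Z_j)\,\mathrm{d}t.
\]
Since $\int_{t_j}^{t_{j+1}} f\,\mathrm{d}t$ is $\mathcal F_{t_{j+1}}$-measurable, the martingale representation theorem together with Jensen's inequality yields $\tau \ssnm{r_j}_H^2 \leqslant \int_{t_j}^{t_{j+1}} \ssnm{f(t,\widetilde P_{j+1},\widetilde Z_j)}_H^2 \,\mathrm{d}t$, so $\tau \sum_j \tau \ssnm{r_j}_H^2 = O(\tau)$, using the uniform bound on $(\widetilde P,\widetilde Z)$ that comes from a preliminary energy estimate for \cref{eq:discretization1} and the Lipschitz hypothesis on $f$.

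The rest is a standard discrete energy argument: test the $\delta P$-equation against $\delta P_j$ in $L^2(\Omega;H)$, use self-adjointness and coercivity of $-A$ together with the telescoping identity $2[\delta P_j,\delta P_{j+1}] = \ssnm{\delta P_j}_H^2 + \ssnm{\delta P_{j+1}}_H^2 - \ssnm{\delta P_{j+1}-\delta P_j}_H^2$, and invoke the discrete It\^o-type inequality
\[
\tau \ssnm{\mathcal I_\tau^j \delta P_{j+1}}_H^2 \leqslant \ssnm{\delta P_{j+1} - \mathbb E_{t_j}\delta P_{j+1}}_H^2 \leqslant \ssnm{\delta P_{j+1}-\delta P_j}_H^2
\]
(the first step again via martingale representation) to absorb the $\tau \ssnm{\delta Z_j}_H^2$ contribution into the martingale piece $\ssnm{\delta P_{j+1} - \delta P_j}_H^2$ after a Young inequality; the Lipschitz cross-terms and the $r_j$-contribution are then controlled by another Young inequality followed by discrete Gr\"onwall, giving $\max_j \ssnm{\delta P_j}_H^2 + \tau \sum_j \ssnm{\delta Z_j}_H^2 \leqslant c\tau$. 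The main obstacle is the delicate bookkeeping: the $\tau \ssnm{\delta Z_j}_H^2$ term can only be absorbed by the martingale difference, not by the coercive term $\tau \ssnm{(-A)^{1/2}\delta P_j}_H^2$, so the Young constants must be tuned carefully, and it is exactly the smallness $\tau < 1/C_L^2$ that makes this absorption close — the same structural mechanism that governs the proof of \cref{thm:conv1}.
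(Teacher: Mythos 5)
Your argument is essentially correct, but it follows a genuinely different route from the paper's. The paper (see \cref{rem:tmp}) proves \cref{thm:conv2} by rerunning the four-step proof of \cref{thm:conv1} verbatim, with the auxiliary pair $(\widetilde P,\widetilde Z)$ now defined by the scheme-\cref{eq:discretization2} analogue of \cref{eq:wtp-wtz} (so $\widetilde Z_j=\mathcal I_\tau^j\widetilde P_{j+1}$) and with \cref{lem:lin-conv} replaced by \cref{lem:lin-conv2}; this is self-contained, needs no restriction on $\tau$ at any stage (scheme \cref{eq:discretization2} is explicit in $Z_j$), and the error system even simplifies since $E^Z_j=\mathcal I_\tau^j E^P_{j+1}$ carries no $G$-term. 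You instead use \cref{thm:conv1} as a black box and control the difference between the two discrete schemes by a discrete energy/It\^o argument, closing with the triangle inequality and a case split on $\tau$. This is legitimate: $\sum_j\tau\ssnm{r_j}_H^2=O(\tau)$ does follow from \cref{eq:Itauj-pro-4} and Jensen together with a uniform stability bound for \cref{eq:discretization1}; the inequality $\tau\ssnm{\mathcal I_\tau^j\delta P_{j+1}}_H^2\leqslant\ssnm{\delta P_{j+1}-\mathbb E_{t_j}\delta P_{j+1}}_H^2$ follows from \cref{eq:Itauj-pro-1,eq:Itauj-pro-4}; and the absorption of $\tau\ssnm{\delta Z_j}_H^2$ into the increment $\ssnm{\delta P_{j+1}-\delta P_j}_H^2$ works with a small Young parameter, after which telescoping and discrete Gr\"onwall give $\max_j\ssnm{\delta P_j}_H+\ssnm{\delta Z}_{L^2(0,T;H)}\leqslant c\tau^{1/2}$. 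What your route buys is economy (no second linear lemma); what it costs is the extra stability estimate for scheme \cref{eq:discretization1} and the case distinction on $\tau$, both of which the paper's route avoids.

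Two small corrections. First, the clean consequence of \cref{eq:Itauj-pro-4} and Jensen is $\ssnm{r_j}_H^2\leqslant\int_{t_j}^{t_{j+1}}\ssnm{f(t,\widetilde P_{j+1},\widetilde Z_j)}_H^2\,\mathrm{d}t$ (no extra $\tau$ on the left), whence $\sum_j\tau\ssnm{r_j}_H^2\leqslant c\tau$; as written, your bound loses a factor of $\tau$ and the expression ``$\tau\sum_j\tau\ssnm{r_j}_H^2$'' is not the quantity that enters $\ssnm{\delta Z}_{L^2(0,T;H)}^2$. Second, the threshold $1/C_L^2$ is not what makes the absorption close: the Young parameter multiplying $\ssnm{\delta P_{j+1}-\delta P_j}_H^2$ can be taken small for any $\tau$; smallness of $\tau$ (relative to a constant depending on $C_L$, not specifically $1/C_L^2$) is needed only for the final Gr\"onwall step and for the solvability of \cref{eq:discretization1}, and both regimes are covered by your case split.
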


\begin{theorem} 
  \label{thm:conv3}
  Let $ (p,z) $ and $ (P,Z) $ be the solutions of
  \cref{eq:model,eq:discretization3}, respectively. Then, under the conditions of
  \cref{thm:conv1}, we have
  \begin{equation}
    \label{eq:conv3}
    \max_{0 \leqslant j < J}
    \ssnm{p(t_j) - P_j}_{H} +
    \ssnm{z-Z}_{L^2(0,T;H)}
    \leqslant c \tau^{1/2}.
  \end{equation}
\end{theorem}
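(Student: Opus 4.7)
The plan is a one-step energy estimate in $L^2(\Omega;H)$, summed over $j$ and closed by a discrete Gronwall inequality. The argument refines the proof of \cref{thm:conv1} by exploiting that \cref{eq:discretization3} leaves $Z(t)\in L^2_{\mathbb F}(0,T;H)$ rather than projecting it onto $\mathcal X_\tau$.

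Setting $e_j := p(t_j)-P_j$ and $e(t) := z(t)-Z(t)$, I first subtract the integrated form of \cref{eq:model} from \cref{eq:discretization3} on each interval $[t_j,t_{j+1}]$. Splitting $\int_{t_j}^{t_{j+1}}Ap(t)\,\mathrm{d}t - \tau AP_j = \eta_j+\tau Ae_j$ with $\eta_j := \int_{t_j}^{t_{j+1}}A(p(t)-p(t_j))\,\mathrm{d}t$, and inserting the intermediate value $f(t,p(t_{j+1}),z(t))$ in the nonlinearity, yields the recursion
\[(I-\tau A)\,e_j = e_{j+1}+\eta_j+\xi_j - \int_{t_j}^{t_{j+1}}e(t)\,\mathrm{d}W(t),\]
with $\xi_j$ the $f$-residual, controlled by \cref{eq:f-Lips}. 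I then combine two identities: the $H$-inner product of this recursion with $e_j$ (which, using $\mathbb{E}(e_j,\int e\,\mathrm{d}W)_H=0$ from the $\mathcal F_{t_j}$-measurability of $e_j$ and $(-Ae_j,e_j)_H=\nm{(-A)^{1/2}e_j}_H^2$, yields coercivity $\tau\mathbb{E}\nm{(-A)^{1/2}e_j}_H^2$), with the $H$-squared norm of the recursion (which, via It\^o's isometry, brings $\mathbb{E}\int\nm{e}_H^2\,\mathrm{d}t$ into the estimate). The outcome is a one-step inequality of the form
\[\mathbb{E}\nm{e_j}_H^2 + c_1\tau\,\mathbb{E}\nm{(-A)^{1/2}e_j}_H^2 + \mathbb{E}\int_{t_j}^{t_{j+1}}\nm{e(t)}_H^2\,\mathrm{d}t \le (1+c\tau)\,\mathbb{E}\nm{e_{j+1}}_H^2 + R_j,\]
where $R_j$ aggregates the truncation contributions from $\eta_j$ and $\xi_j$. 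It is crucial that $\mathbb{E}\int\nm{e}_H^2\,\mathrm{d}t$ appears on the left-hand side: this is precisely where the continuous nature of $Z(t)$ under \cref{eq:discretization3} buys out the projection error $\ssnm{(I-\mathcal P_\tau)z}_{L^2(0,T;H)}$ that is present in \cref{thm:conv1}.

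Bounding $R_j$ proceeds by (i) $H^{1/2}$--$H^{-1/2}$ duality for the $\eta_j$ pieces --- valid because $P_j\in H^1$ (since $(I-\tau A)^{-1}\colon H\to H^1$) and $p(t_j)\in H^{1/2}$, so $e_j\in H^{1/2}$ --- (ii) the Lipschitz bound \cref{eq:f-Lips} for the $\xi_j$ pieces, and (iii) the temporal-modulus estimate
\[\sum_{j=0}^{J-1}\mathbb{E}\int_{t_j}^{t_{j+1}}\nm{p(t)-p(t_j)}_{H^{1/2}}^2\,\mathrm{d}t \le c\tau^{1/2},\]
derived from the backward mild formulation of \cref{eq:model}, the regularity \cref{eq:regu}, and the interpolation $\nm{v}_{H^{1/2}}^2\le \nm{v}_H\nm{v}_{H^1}$. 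Summing from $j=J-1$ down to $0$ (using $e_J=0$) and applying discrete Gronwall --- where $\tau<1/C_L^2$ is what permits the Lipschitz terms to be absorbed --- then gives $\max_{0\le j<J}\mathbb{E}\nm{e_j}_H^2 + \mathbb{E}\int_0^T\nm{e(t)}_H^2\,\mathrm{d}t \le c\tau$, and \cref{eq:conv3} follows.

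The principal obstacle is handling $\eta_j$. In isolation, $\eta_j$ lies only in $L^2(\Omega;H^{-1/2})$, because $p(t_j)$ is merely $H^{1/2}$-valued pointwise; only the combined quantity $\eta_j+\tau Ae_j = \int_{t_j}^{t_{j+1}}A(p(t)-P_j)\,\mathrm{d}t$ is $H$-valued, thanks to $P_j\in H^1$. Any $H$-norm manipulation must therefore keep this combination intact, and the $\tau^{1/2}$-smallness is extracted only through the duality-plus-interpolation device above, which genuinely requires the full $L^2(0,T;H^1)$-regularity of $p$ asserted in \cref{eq:regu}.
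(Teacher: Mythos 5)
Your route is genuinely different from the paper's. The paper never forms a pointwise error equation: it first proves the linear case (\cref{lem:lin-conv3}) by comparing the exact mild representation \cref{eq:lin-mild-form} with the discrete resolvent representation \cref{eq:xy}, extracting the $z$-error from the martingale parts exactly as in the proof of \cref{eq:lin-conv-z-Z} (where the absence of $\mathcal I_\tau^j$ in \cref{eq:discretization3} is what removes the $(I-\mathcal P_\tau)z$ term), and then handles the nonlinearity by perturbing around the auxiliary pair $(\widetilde P,\widetilde Z)$ with frozen data $f(t,p(t),z(t))$ and bootstrapping backward over $O(1)$ subintervals where $c^*\sqrt{T-t_j}\leqslant 1/2$. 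Your direct one-step energy identity (conditional-expectation Pythagoras plus It\^o isometry plus discrete Gronwall) is the classical finite-dimensional BSDE strategy lifted to the evolution setting; its appeal is that it dispenses with \cref{lem:auxi-1} and the auxiliary pair, and the orthogonal-decomposition and testing steps you describe are sound, including your correct observation that only $\int_{t_j}^{t_{j+1}}A(p(t)-P_j)\,\mathrm dt$ is $H$-valued.

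There are, however, two concrete problems. First, a rate mismatch: the term $\tau^{-1}\ssnm{\eta_j}_{H^{-1/2}}^2$ that Young's inequality leaves after absorbing $\tfrac\tau2\ssnm{e_j}_{H^{1/2}}^2$ into the coercivity satisfies $\tau^{-1}\ssnm{\eta_j}_{H^{-1/2}}^2\leqslant\int_{t_j}^{t_{j+1}}\ssnm{p(t)-p(t_j)}_{H^{1/2}}^2\,\mathrm dt$, so your claimed modulus bound $\sum_j\int_{t_j}^{t_{j+1}}\ssnm{p(t)-p(t_j)}_{H^{1/2}}^2\,\mathrm dt\leqslant c\tau^{1/2}$ only yields $\max_j\ssnm{e_j}^2\leqslant c\tau^{1/2}$, i.e.\ an $O(\tau^{1/4})$ rate, not \cref{eq:conv3}; you need this sum to be $O(\tau)$. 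Second, the device you propose for it fails: the interpolation $\nm{v}_{H^{1/2}}^2\leqslant\nm{v}_H\nm{v}_{H^1}$ applied to $v=p(t)-p(t_j)$ requires $p(t_j)\in H^1$, whereas \cref{eq:regu} gives only $p(t_j)\in H^{1/2}$ pointwise in time. The correct $O(\tau)$ bound must instead be read off from \cref{eq:p-z-int}: the semigroup part is handled by \cref{eq:I-etA} and $p\in L^2_{\mathbb F}(0,T;H^1)$, and the martingale part by It\^o isometry and $z\in L^2_{\mathbb F}(0,T;H^{1/2})$, but the drift part $\int_{t_j}^{t}e^{(r-t_j)A}g(r)\,\mathrm dr$ measured in $H^{1/2}$ with $g$ merely in $L^2_{\mathbb F}(0,T;H)$ is genuinely delicate (the naive bound produces a non-integrable weight $(r-t_j)^{-1}$) and is not supplied by your argument. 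A lesser omission: your squared-norm/It\^o step also needs $\sum_j\ssnm{\int_{t_j}^{t_{j+1}}A(p(t)-P_j)\,\mathrm dt}_H^2\leqslant c\tau$, which requires the discrete $\ell^2_\tau(H^1)$ stability of $P$ and the argument of \cref{lem:Ap+g}; these ingredients exist in the paper (\cref{eq:auxi_stab,eq:Ap+g}) but are not invoked in your proposal.
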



We only provide a complete proof of \cref{thm:conv1}, since the proofs of
\cref{thm:conv2,thm:conv3} are similar (see \cref{rem:tmp}). To this end,
we proceed as follows.

\subsection{Preliminary results}
\label{ssec:pre}
We present some standard estimates as follows. For any $ 0 < t \leqslant T $ and
$ 0 \leqslant \beta \leqslant \gamma \leqslant 1 $, we have (see, e.g.,
\cite[Theorem~6.13, Chapter~2]{Pazy1983})
\begin{align} 
  \nm{e^{tA}}_{\mathcal L(H^\beta, H^\gamma)}
  \leqslant ct^{\beta-\gamma},
  \label{eq:etA} \\
  \nm{I - e^{tA}}_{\mathcal L(H^\gamma, H^\beta)}
  \leqslant c t^{\gamma-\beta}.
  \label{eq:I-etA}
\end{align}
By \cite[Theorem 7.3]{Thomee2006} we have, for any $ 0 \leqslant \beta \leqslant
1 $,
\begin{equation} 
  \label{eq:I-tauA-conv}
  \nm{e^{m\tau A} - (I - \tau A)^{-m}}_{
    \mathcal L(H^\beta,H)
  } \leqslant c \tau^{\beta} m^{\beta-1}
  \quad \forall m > 0.
\end{equation}
For any $ v \in H^{1/2} $ and $ g \in L^2(0,T;H) $, we have the following
estimates:
\begin{equation} 
  \label{eq:auxi}
  \sum_{j=0}^{J-1} \nmB{
    w_j - e^{(T-t_j)A}v - \int_{t_j}^T e^{(s-t_j)A}g(t) \, \mathrm{d}t
  }_H^2 \leqslant c\tau \big( \nm{v}_{H^{1/2}}^2 + \nm{g}_{L^2(0,T;H)}^2 \big)
\end{equation}
and, for any $ 0 \leqslant j < J $,
\begin{equation} 
 \label{eq:auxi_stab}
 \max_{j \leqslant k < J} \nm{w_k}_{H^{1/2}} +
 \Big(
 \sum_{k=j}^{J-1} \tau \nm{w_k}_{H^{1}}^2
 \Big)^{1/2} \leqslant
 c\big(
 \nm{v}_{H^{1/2}} + \nm{g}_{L^2(t_j,T;H)}
 \big),
\end{equation}
where $ \{w_j\}_{j=0}^{J-1} $ is defined by
\[ 
  w_j := (I-\tau A)^{-(J-j)} v +
  \sum_{k=j}^{J-1} (I - \tau A)^{-(k-j+1)} \int_{t_k}^{t_{k+1}}
  g(t) \, \mathrm{d}t \quad \forall 0 \leqslant j < J.
\]
In addition, for any $ v \in L^2(\Omega;H) $ and $ w \in L^2(\Omega, \mathcal
F_{t_j}, \mathbb P; H) $ with $ 0 \leqslant j < J $, the following properties
are easily verified by \cref{eq:Itauj-def}:
\begin{align} 
  & \mathcal I_\tau^j w = 0 \quad \mathbb P \text{-a.s.},
  \label{eq:Itauj-pro-1} \\
  & \big( I - \delta W_j \mathcal I_\tau^j \big)( \delta W_j w ) = 0
  \quad \mathbb P \text{-a.s.},
  \label{eq:Itauj-pro-2} \\
  & \big[
    v - \delta W_j \mathcal I_\tau^j v, \,
    \delta W_j w
  \big] = 0, \label{eq:Itauj-pro-3} \\
  & \ssnmb{v - \delta W_j \mathcal I_\tau^j v}_{H}^2 +
  \ssnmb{\delta W_j \mathcal I_\tau^j v}_{H}^2 =
  \ssnm{v}_{H}^2. \label{eq:Itauj-pro-4}
\end{align}

\begin{remark}
  The estimates \cref{eq:auxi,eq:auxi_stab} are standard; see, e.g.,
  \cite[Chapter~12]{Thomee2006}.
\end{remark}

\subsection{Three temporal semi-discretizations of a backward linear stochastic evolution equation}
This subsection studies the convergence of three temporal semi-discretizations
for the following backward linear stochastic evolution equation:
\begin{subequations}
\label{eq:lin}
\begin{numcases}{}
  \mathrm{d} p(t) = -(A p + g)(t) \mathrm{d}t +
  z(t) \mathrm{d}W(t), \quad 0 \leqslant t \leqslant T, \\
  p(T) = p_T,
\end{numcases}
\end{subequations}
where $ g \in L_{\mathbb F}^2(0,T;H) $ and $ p_T \in L^2(\Omega;
H) $. The main results are the following three lemmas.
\begin{lemma} 
  \label{lem:lin-conv}
  Assume that $ (p,z) $ is the solution of \cref{eq:lin} with $ p_T
  \in L^2(\Omega; H^{1/2}) $ and $ g \in L_\mathbb F^2(0, T; H) $.
  Define $ (P,Z) \in \mathcal X_\tau \times \mathcal X_\tau $ by
  \begin{subequations} 
  \label{eq:P-Z-lin}
  \begin{numcases}{}
  P_J = p_T ,  \label{eq:P-Z-lin-0} \\
  Z_j = \mathcal I_\tau^j \Big(
  P_{j+1} + \int_{t_j}^{t_{j+1}} g(t) \, \mathrm{d}t
  \Big), \quad 0 \leqslant j < J,   \label{eq:P-Z-2} \\
  P_j - \mathbb E_{t_j} P_{j+1} =
  \tau A P_j + \mathbb E_{t_j}
  \int_{t_j}^{t_{j+1}} g(t) \, \mathrm{d}t,
  \quad 0 \leqslant j < J. \label{eq:P-Z-1}
  \end{numcases}
  \end{subequations}
  Then the following estimates hold: for any $ 0 \leqslant j < J $,
  \begin{small}
  \begin{align}
    & \ssnm{p(t_j) - P_j}_{H}
    \leqslant c \tau^{1/2} \Big(
      (J-j)^{-1/2} \ssnm{p_T}_{H^{1/2}} +
      \ssnm{g}_{L^2(0,T;H)}
    \Big); \label{eq:lin-conv-inf} \\
    & \Big(
      \sum_{j=0}^{J-1} \ssnm{p - P_{j+1}}_{L^2(t_j,t_{j+1};H)}^2
    \Big)^{1/2} \leqslant
    c \tau^{1/2} \Big(
      \ssnm{p_T}_{H^{1/2}} + \ssnm{g}_{L^2(0,T;H))}
    \Big); \label{eq:lin-conv-p-Pj+1} \\
    & \ssnm{z-Z}_{L^2(0,T;H)}  \leqslant
    c\tau^{1/2} \Big(
      \ssnm{p_T}_{H^{1/2}} + \ssnm{g}_{L^2(0,T;H)}
    \Big) + \ssnm{(I-\mathcal P_\tau)z}_{L^2(0,T;H)}.
    \label{eq:lin-conv-z-Z}
  \end{align}
  \end{small}
\end{lemma}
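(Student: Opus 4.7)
The plan is to derive closed-form representations for both the scheme and the mild solution of \cref{eq:lin}, and then to compare them using the auxiliary estimates of \cref{ssec:pre}. Iterating \cref{eq:P-Z-1} gives $P_j = \mathbb{E}_{t_j} w_j$, where $w_j$ is the discrete evolution defined in \cref{ssec:pre} with $v = p_T$ and forcing $g$. On the continuous side, since the Ito integral in the mild formula has vanishing $\mathcal{F}_{t_j}$-conditional expectation,
\[
  p(t_j) = \mathbb{E}_{t_j}\Bigl[e^{(T-t_j)A} p_T + \int_{t_j}^T e^{(s-t_j)A} g(s) \, \mathrm{d}s\Bigr].
\]
Both quantities live under the same $\mathbb{E}_{t_j}$, so Jensen's inequality in $L^2(\Omega;H)$ reduces the first two estimates to pointwise comparisons of $w_j$ against its semigroup counterpart.

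For \cref{eq:lin-conv-inf}, I would split the error into a terminal piece and a forcing piece. The terminal piece $(e^{(T-t_j)A} - (I - \tau A)^{-(J-j)}) p_T$ is controlled by \cref{eq:I-tauA-conv} with $\beta = 1/2$ and $m = J - j$, producing the singular factor $\tau^{1/2}(J-j)^{-1/2}$. The forcing piece decomposes into subintervals $[t_k, t_{k+1}]$, on each of which the discrepancy is bounded by combining \cref{eq:I-tauA-conv} (with $\beta = 0$) and \cref{eq:etA}, and then reassembled via Cauchy--Schwarz. For \cref{eq:lin-conv-p-Pj+1}, the strategy is similar but one must compare $P_{j+1}$ (constant on $[t_j, t_{j+1}]$) against the time-varying $p(t)$: adding and subtracting the semigroup formula, the total error splits into a piece handled directly by the global bound \cref{eq:auxi} and an auxiliary piece $\int_{t_j}^{t_{j+1}} \| p(t) - p(t_{j+1}) \|_H^2 \, \mathrm{d}t$, which is controlled via \cref{eq:etA,eq:I-etA} applied to the mild formula together with a standard Ito-isometry estimate for $\int_t^{t_{j+1}} z(s) \, \mathrm{d}W(s)$.

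For \cref{eq:lin-conv-z-Z}, I would first invoke the $L^2(\Omega;L^2(0,T;H))$-orthogonality of $(I - \mathcal{P}_\tau) z$ against $\mathcal{X}_\tau$ to obtain the Pythagoras splitting
\[
  \ssnm{z - Z}_{L^2(0,T;H)}^2 = \ssnm{(I - \mathcal{P}_\tau) z}_{L^2(0,T;H)}^2 + \ssnm{\mathcal{P}_\tau z - Z}_{L^2(0,T;H)}^2,
\]
so the task reduces to proving $\ssnm{\mathcal{P}_\tau z - Z}_{L^2(0,T;H)} \leq c\tau^{1/2}(\cdots)$. To connect $(\mathcal{P}_\tau z)_j$ with the scheme, I would multiply the identity $p(t_{j+1}) - p(t_j) = -\int_{t_j}^{t_{j+1}}(Ap + g) \, \mathrm{d}s + \int_{t_j}^{t_{j+1}} z \, \mathrm{d}W$ by $\delta W_j / \tau$ and take $\mathbb{E}_{t_j}$; Ito's isometry together with \cref{eq:Itauj-pro-1} then yields $(\mathcal{P}_\tau z)_j = \mathcal{I}_\tau^j p(t_{j+1}) + \mathcal{I}_\tau^j\!\int_{t_j}^{t_{j+1}} (Ap + g)(s) \, \mathrm{d}s$. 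Subtracting the definition of $Z_j$ leaves exactly $\mathcal{I}_\tau^j[p(t_{j+1}) - P_{j+1}] + \mathcal{I}_\tau^j \int_{t_j}^{t_{j+1}} Ap(s) \, \mathrm{d}s$; using the operator bound $\tau \ssnm{\mathcal{I}_\tau^j v}_H^2 \leq \ssnm{v}_H^2$ from \cref{eq:Itauj-pro-4}, the $L^2$-in-time estimate \cref{eq:lin-conv-p-Pj+1}, and the standard energy bound $\ssnm{Ap}_{L^2(0,T;H)} \leq c(\ssnm{p_T}_{H^{1/2}} + \ssnm{g}_{L^2(0,T;H)})$ inherited from \cref{eq:regu}, one sums over $j$ to obtain the desired $O(\tau^{1/2})$ bound on $\ssnm{\mathcal{P}_\tau z - Z}_{L^2(0,T;H)}$.

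The main obstacle I anticipate is this final summation: a naive use of $\ssnm{\mathcal{I}_\tau^j v}_H \leq \tau^{-1/2} \ssnm{v}_H$ with $v = p(t_{j+1}) - P_{j+1}$ combined with the pointwise bound \cref{eq:lin-conv-inf} would introduce a logarithmic loss stemming from the $(J-j)^{-1/2}$ singularity. The remedy is to rely on the $L^2$-type bound \cref{eq:lin-conv-p-Pj+1} instead of the pointwise one, and to exploit that $\mathcal{I}_\tau^j$ annihilates every $\mathcal{F}_{t_j}$-measurable input so that only the martingale-increment component of $p(t_{j+1}) - P_{j+1}$, which enjoys better summability, is actually seen by $\mathcal{I}_\tau^j$.
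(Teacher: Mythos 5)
Your overall architecture coincides with the paper's: the discrete representation $P_j=\mathbb E_{t_j}\big((I-\tau A)^{-(J-j)}p_T+\sum_k\int_{t_k}^{t_{k+1}}(I-\tau A)^{-(k-j+1)}g\big)$ against the mild formula, the terminal/forcing split with \cref{eq:I-tauA-conv} and the kernel estimate of \cref{lem:auxi-1} for \cref{eq:lin-conv-inf}, and the reduction of \cref{eq:lin-conv-p-Pj+1} to \cref{eq:auxi} plus a time-continuity bound for $p-p(t_{j+1})$. For \cref{eq:lin-conv-z-Z}, your identity $(\mathcal P_\tau z)_j-Z_j=\mathcal I_\tau^j\big(p(t_{j+1})-P_{j+1}\big)+\mathcal I_\tau^j\int_{t_j}^{t_{j+1}}Ap(s)\,\mathrm{d}s$, combined with the Pythagoras splitting and $\tau\ssnm{\mathcal I_\tau^j v}_H^2\leqslant\ssnm{v}_H^2$, is an equivalent (and arguably cleaner) packaging of the paper's manipulation of $\int_{t_j}^{t_{j+1}}(z-Z)\,\mathrm{d}W$ via \cref{eq:Itauj-pro-3,eq:Itauj-pro-4}; both routes reduce the problem to bounding $\sum_{j}\ssnm{p(t_{j+1})-P_{j+1}}_H^2+\sum_j\ssnm{\int_{t_j}^{t_{j+1}}Ap}_H^2$ by $c\tau(\ssnm{p_T}_{H^{1/2}}^2+\ssnm{g}_{L^2(0,T;H)}^2)$, the second sum being exactly \cref{lem:Ap+g}.

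The one step that fails as written is your treatment of the first of these two sums. You propose to control it by the ``$L^2$-in-time estimate'' \cref{eq:lin-conv-p-Pj+1}, but that estimate carries an extra factor of $\tau$ from integrating a constant over an interval of length $\tau$: it only yields $\tau\sum_j\ssnm{p(t_{j+1})-P_{j+1}}_H^2\leqslant c\tau(\cdots)$, i.e.\ $\sum_j\ssnm{p(t_{j+1})-P_{j+1}}_H^2\leqslant c(\cdots)$, which is $O(1)$ rather than the required $O(\tau)$ and would leave $\ssnm{\mathcal P_\tau z-Z}_{L^2(0,T;H)}$ bounded only by a constant. Note also that summing the pointwise bound \cref{eq:lin-conv-inf} does not help (its $g$-contribution is $c\tau\ssnm{g}^2$ per node, hence $O(1)$ after summation), and your proposed remedy of keeping only the martingale-increment component of $p(t_{j+1})-P_{j+1}$ is circular: substituting $\mathcal I_\tau^j v=\mathcal I_\tau^j(I-\mathbb E_{t_j})v$ and the scheme's own update just reproduces the identity you started from. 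The missing ingredient is the \emph{square-summed} node error estimate, i.e.\ \cref{eq:auxi} applied with $v=p_T$, which gives $\sum_j\ssnm{p(t_j)-P_j}_H^2\leqslant c\tau(\ssnm{p_T}_{H^{1/2}}^2+\ssnm{g}_{L^2(0,T;H)}^2)$ directly (this is the paper's \cref{eq:732}); it encodes an $\ell^2$-summability gain over the pointwise bound that cannot be recovered from \cref{eq:lin-conv-inf,eq:lin-conv-p-Pj+1}. Since you already invoke \cref{eq:auxi} in your proof of \cref{eq:lin-conv-p-Pj+1}, the repair is immediate: cite it again at this point, and the rest of your argument closes.
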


\begin{lemma} 
  \label{lem:lin-conv2}
  Define $ (P,Z) \in \mathcal X_\tau \times
  \mathcal X_\tau $ by
  \begin{subequations}
  \begin{numcases}{}
    P_J = p_T, \notag \\
    Z_j = \mathcal I_\tau^j P_{j+1}, \qquad 0 \leqslant j < J, \notag \\
    P_j - \mathbb E_{t_j} P_{j+1} =
    \tau A P_j + \mathbb E_{t_j}
    \int_{t_j}^{t_{j+1}} g(t) \, \mathrm{d}t,
    \qquad 0 \leqslant j < J. \notag
  \end{numcases}
  \end{subequations}
 Then the three estimates in \cref{lem:lin-conv} still hold, under the
 conditions of \cref{lem:lin-conv}.
\end{lemma}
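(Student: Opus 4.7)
The plan is to reduce everything to \cref{lem:lin-conv} by a direct comparison. The critical observation is that the recursion defining $P_j$ in the scheme of \cref{lem:lin-conv2} is identical to \cref{eq:P-Z-1}; in the linear case, the equation for $P$ does not involve $Z$ at all. Consequently, the $P$-component produced by the scheme in \cref{lem:lin-conv2} coincides exactly with the $P$-component from \cref{lem:lin-conv}, and the two estimates \cref{eq:lin-conv-inf,eq:lin-conv-p-Pj+1} are inherited immediately with no additional work.

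Only the $Z$-estimate \cref{eq:lin-conv-z-Z} requires a new argument. Denote by $\widetilde Z$ the process produced by the scheme in \cref{lem:lin-conv} and by $Z$ the one considered here. Since the $P$s coincide, the linearity of $\mathcal I_\tau^j$ gives
\[
\widetilde Z_j - Z_j = \mathcal I_\tau^j \int_{t_j}^{t_{j+1}} g(t) \,\mathrm{d}t, \qquad 0 \le j < J.
\]
The plan is to bound this difference by $c\tau^{1/2} \ssnm{g}_{L^2(0,T;H)}$ and then invoke the triangle inequality together with \cref{eq:lin-conv-z-Z}.

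For the bound on $\widetilde Z_j - Z_j$, I would use the orthogonality identity \cref{eq:Itauj-pro-4}. Since $\mathcal I_\tau^j v$ is $\mathcal F_{t_j}$-measurable while $\delta W_j$ is independent of $\mathcal F_{t_j}$ with $\mathbb E (\delta W_j)^2 = \tau$, one has $\ssnm{\delta W_j \mathcal I_\tau^j v}_H^2 = \tau \ssnm{\mathcal I_\tau^j v}_H^2$, so \cref{eq:Itauj-pro-4} yields the $\tau^{-1/2}$-boundedness $\ssnm{\mathcal I_\tau^j v}_H \le \tau^{-1/2} \ssnm{v}_H$ for every $v \in L^2(\Omega;H)$. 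Applying this with $v = \int_{t_j}^{t_{j+1}} g(t) \,\mathrm{d}t$ and using Cauchy--Schwarz in time produces $\ssnm{\widetilde Z_j - Z_j}_H \le \ssnm{g}_{L^2(t_j,t_{j+1};H)}$. Summing the squares against the $\tau$-weight gives $\ssnm{\widetilde Z - Z}_{L^2(0,T;H)} \le \tau^{1/2} \ssnm{g}_{L^2(0,T;H)}$, which has precisely the form of the first term on the right-hand side of \cref{eq:lin-conv-z-Z}.

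This reduction has essentially no genuine obstacle: all the analytic work is already contained in \cref{lem:lin-conv}, and the only additional ingredient is the elementary $\tau^{-1/2}$-bound on $\mathcal I_\tau^j$ extracted from \cref{eq:Itauj-pro-4}. The only point to be verified is that the $\tau^{1/2}$ factor produced by this bound is consistent with the convergence rate target in \cref{eq:lin-conv-z-Z}, which it does exactly.
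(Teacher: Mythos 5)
Your proposal is correct. The paper does not write out a proof of \cref{lem:lin-conv2} at all: it merely remarks that the argument is ``similar to (and simpler than)'' that of \cref{lem:lin-conv}, the implied route being to rerun the third step of that proof with $Z_j=\mathcal I_\tau^j P_{j+1}$ in place of \cref{eq:P-Z-2}, which slightly changes the algebraic manipulations around \cref{eq:40000}. Your route is genuinely different and arguably cleaner: since the $P$-recursions and terminal conditions are literally identical, the two schemes share the same $P$, so \cref{eq:lin-conv-inf,eq:lin-conv-p-Pj+1} are inherited verbatim, and the two $Z$'s differ exactly by $\mathcal I_\tau^j\int_{t_j}^{t_{j+1}}g(t)\,\mathrm{d}t$. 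Your bound on this difference is sound: \cref{eq:Itauj-pro-4} together with $\ssnm{\delta W_j\mathcal I_\tau^j v}_H^2=\tau\ssnm{\mathcal I_\tau^j v}_H^2$ (valid because $\mathcal I_\tau^j v$ is $\mathcal F_{t_j}$-measurable and $\delta W_j$ is independent of $\mathcal F_{t_j}$) gives $\nm{\mathcal I_\tau^j}_{\mathcal L(L^2(\Omega;H))}\leqslant\tau^{-1/2}$ --- an estimate the paper itself records at the start of the proof of \cref{thm:conv1} --- and Cauchy--Schwarz in time then yields $\ssnm{\widetilde Z-Z}_{L^2(0,T;H)}\leqslant\tau^{1/2}\ssnm{g}_{L^2(0,T;H)}$, which is absorbed into the first term of \cref{eq:lin-conv-z-Z}. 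What your approach buys is a two-line derivation of the $Z$-estimate that reuses \cref{lem:lin-conv} as a black box; what the paper's implied approach buys is a self-contained template that also adapts to \cref{lem:lin-conv3}, where no piecewise-constant comparison process is available.
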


\begin{lemma} 
  \label{lem:lin-conv3}
 Define $ (P,Z) \in \mathcal X_\tau \times L_\mathbb F^2(0,T;H) $ by
 \begin{subequations}
  \begin{numcases}{}
    P_J = p_T, \notag \\
    P_j - P_{j+1} = \tau A P_j +
    \int_{t_j}^{t_{j+1}} g(t) \, \mathrm{d}t -
    \int_{t_j}^{t_{j+1}} Z(t) \, \mathrm{d}W(t),
    \qquad 0 \leqslant j < J. \notag
  \end{numcases}
  \end{subequations}
  Then, under the conditions of \cref{lem:lin-conv}, the error estimates
  \cref{eq:lin-conv-inf,eq:lin-conv-p-Pj+1} in \cref{lem:lin-conv} still hold,
  and
  \begin{equation}
    \label{eq:lin-conv3}
    \ssnm{z-Z}_{L^2(0,T;H)} \leqslant
    c \tau^{1/2} \big(
      \ssnm{p_T}_{H^{1/2}} +
      \ssnm{g}_{L^2(0,T;H)}
    \big).
  \end{equation}
\end{lemma}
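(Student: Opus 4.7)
The plan is to first observe that the $P_j$ in \cref{lem:lin-conv3} coincides with the $P_j$ in \cref{lem:lin-conv} and \cref{lem:lin-conv2}. Indeed, taking the conditional expectation $\mathbb E_{t_j}$ of the second equation of the scheme and noting that $\mathbb E_{t_j}\int_{t_j}^{t_{j+1}}Z\,\mathrm{d}W = 0$ for any $\mathbb F$-adapted $Z$ recovers exactly the recursion \cref{eq:P-Z-1} of \cref{lem:lin-conv}. Hence estimates \cref{eq:lin-conv-inf} and \cref{eq:lin-conv-p-Pj+1} carry over immediately, and the remainder of the proof is devoted to \cref{eq:lin-conv3}.

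For the $Z$-error analysis, I would subtract the backward equation \cref{eq:lin} integrated over $[t_j, t_{j+1}]$ from the scheme to obtain the $H$-valued identity
\[
  M_j := \int_{t_j}^{t_{j+1}}(z-Z)\,\mathrm{d}W = e_{j+1} - e_j + S_j,
\]
where $e_j := p(t_j) - P_j$ and $S_j := \int_{t_j}^{t_{j+1}} A(p(t) - P_j)\,\mathrm{d}t$. The drift $S_j$ is well-defined in $H$ because $p \in L^2_\mathbb F(0,T;H^1)$ by \cref{eq:regu} and $P_j \in H^1$ for $j < J$ by the resolvent structure of the scheme. Taking $\mathbb E_{t_j}$ of the identity yields $e_j = \mathbb E_{t_j}(e_{j+1} + S_j)$, identifying $M_j$ as the $\mathcal F_{t_j}$-martingale increment of $e_{j+1} + S_j$.

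Next, the Pythagorean identity for conditional expectation combined with It\^o's isometry gives
\[
  \ssnm{z - Z}_{L^2(t_j,t_{j+1};H)}^2 = \ssnm{M_j}_H^2 = \ssnm{e_{j+1} + S_j}_H^2 - \ssnm{e_j}_H^2.
\]
Summing over $j$ and telescoping via $e_J = 0$ produces the master identity
\[
  \ssnm{z - Z}_{L^2(0,T;H)}^2 + \ssnm{e_0}_H^2 = \sum_{j=0}^{J-1}\bigl(2\,[e_{j+1}, S_j] + \ssnm{S_j}_H^2\bigr).
\]
The sum $\sum_j \ssnm{S_j}_H^2$ is directly bounded by $c\tau(\ssnm{p_T}_{H^{1/2}}^2 + \ssnm{g}_{L^2(0,T;H)}^2)$ using $\nm{S_j}_H^2 \le \tau\int_{t_j}^{t_{j+1}}\nm{p - P_j}_{H^1}^2\,\mathrm{d}t$ together with \cref{eq:regu} and \cref{eq:auxi_stab}.

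The main obstacle is the cross term. By self-adjointness, $(e_{j+1}, S_j)_H = -\int_{t_j}^{t_{j+1}} (e_{j+1}, p(t) - P_j)_{H^{1/2}}\,\mathrm{d}t$. The plan is to decompose $p(t) - P_j = (p(t) - p(t_{j+1})) + e_{j+1} + (P_{j+1} - P_j)$, which isolates an explicit term $-\tau\nm{e_{j+1}}_{H^{1/2}}^2$ that can be transposed to the left-hand side of the master identity as a positive stabilizing contribution. The remaining two pieces, involving the temporal increment $p(t) - p(t_{j+1})$ and the scheme jump $P_{j+1} - P_j$, are then controlled via Young's inequality together with (i) a temporal $H^{1/2}$-regularity estimate for $p$ obtained from its mild representation and the semigroup bounds \cref{eq:etA,eq:I-etA}, and (ii) an $H^{1/2}$-bound on the scheme jumps derived from the scheme itself and \cref{eq:auxi_stab}. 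The sharp $O(\tau)$ rate---and the elimination of the $\ssnm{(I-\mathcal P_\tau)z}_{L^2(0,T;H)}$ term present in \cref{lem:lin-conv}---crucially reflects that here $Z$ realizes the exact martingale representation of $P_{j+1}+\int_{t_j}^{t_{j+1}} g\,\mathrm{d}t - (I-\tau A)P_j$, rather than its $L^2(\Omega;L^2(0,T;H))$-projection onto $\mathcal X_\tau$.
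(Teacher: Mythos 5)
Your overall architecture is sound and, up to the point of the master identity, it reproduces what the paper (implicitly) does: the observation that taking $\mathbb E_{t_j}$ recovers \cref{eq:P-Z-1}, so that $P$ coincides with the scheme of \cref{lem:lin-conv} and the estimates \cref{eq:lin-conv-inf,eq:lin-conv-p-Pj+1} carry over, is exactly the intended reduction; and your identity $M_j=(I-\mathbb E_{t_j})(e_{j+1}+S_j)$ together with the It\^o isometry is the same orthogonality argument the paper runs for the first scheme (the paper simply uses that $I-\mathbb E_{t_j}$ is an $L^2(\Omega;H)$-contraction, i.e.\ drops the term $-\ssnm{e_j}_H^2$ that you keep). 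Your explanation for why the $\ssnm{(I-\mathcal P_\tau)z}_{L^2(0,T;H)}$ term disappears is also the right one.

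The problem is your treatment of the cross term, which is both unnecessary and, as stated, a gap. From your master identity the lemma follows in one line: $2\,[e_{j+1},S_j]\leqslant \ssnm{e_{j+1}}_H^2+\ssnm{S_j}_H^2$, then $\sum_j\ssnm{e_{j+1}}_H^2\leqslant c\tau(\ssnm{p_T}_{H^{1/2}}^2+\ssnm{g}_{L^2(0,T;H)}^2)$ by \cref{eq:auxi} (this is the paper's \cref{eq:732}; note it does \emph{not} follow from summing the squares of \cref{eq:lin-conv-inf}, which loses a factor $J$ on the $g$-part), and $\sum_j\ssnm{S_j}_H^2\leqslant c\tau(\cdots)$ by \cref{lem:Ap+g} together with the discrete $H^1$-stability in \cref{eq:auxi_stab} (or, cleaner, keep $\int_{t_j}^{t_{j+1}}Ap\,\mathrm{d}t$ instead of $\int_{t_j}^{t_{j+1}}A(p-P_j)\,\mathrm{d}t$, as the paper does, and only \cref{lem:Ap+g} is needed). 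Instead, you move the pairing into $H^{1/2}$ and invoke ``a temporal $H^{1/2}$-regularity estimate for $p$'', i.e.\ essentially $\sum_j\ssnm{p-p(t_{j+1})}_{L^2(t_j,t_{j+1};H^{1/2})}^2\leqslant c\tau(\cdots)$, and an $H^{1/2}$-bound on the jumps $P_{j+1}-P_j$. Neither is established in the paper, and neither is routine under the standing assumption $p_T\in L^2(\Omega;H^{1/2})$: the increment $(I-e^{(t_{j+1}-t)A})p(t_{j+1})$ measured in $H^{1/2}$ costs $\ssnm{p(t_{j+1})}_{H^1}$, so you would need a \emph{discrete-in-time} bound $\tau\sum_j\ssnm{p(t_{j+1})}_{H^1}^2\leqslant c$, which requires a separate argument (the continuous bound \cref{eq:lin-p-z-regu} does not control a Riemann sum of a function that blows up near $t=T$). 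So the final step of your plan is not justified as written; replace it with the Cauchy--Schwarz bound in $H$ and the proof closes with the ingredients already in the paper.
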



Since the proofs of \cref{lem:lin-conv2,lem:lin-conv3} are similar to (and
simpler than) that of \cref{lem:lin-conv}, we only prove the latter. To this
end, we first present some standard properties of the solution $ (p,z) $ to
equation \cref{eq:lin} as follows:
\begin{itemize} 
  \item for any $ 0 \leqslant t \leqslant T $, we have
    \begin{align} 
      p(t) = \mathbb E_t \Big(
        e^{(T-t)A} p_T + \int_t^T e^{(r-t)A} g(r) \, \mathrm{d}r
      \Big) \quad \mathbb P \text{-a.s.};
      \label{eq:lin-mild-form}
    \end{align}
  \item for any $ 0 \leqslant s \leqslant t \leqslant T $, we have
    \begin{small}
      \begin{align} 
         & p(s) - p(t) = \int_s^t (Ap + g)(r) \, \mathrm{d}r -
         \int_s^t z(r) \, \mathrm{d}W(r) \quad \mathbb P \text{-a.s.},
         \label{eq:lin-strong} \\
         & p(s) - e^{(t-s)A} p(t) =
         \int_s^t e^{(r-s)A} g(r) \, \mathrm{d}r -
         \int_s^t e^{(r-s)A} z(r) \, \mathrm{d}W(r)
         \quad \mathbb P \text{-a.s.};
         \label{eq:p-z-int}
      \end{align}
    \end{small}
  \item for $ p_T \in L^2(\Omega;H^{1/2}) $ and $ g \in L_\mathbb F^2(0,T;H) $,
    we have
    \begin{equation}
      \label{eq:lin-p-z-regu}
      \ssnm{p}_{L^2(0,T;H^1)} + \ssnm{z}_{L^2(0,T;H^{1/2})}
      \leqslant c \big( \ssnm{p_T}_{H^{1/2}} + \ssnm{g}_{L^2(0,T;H)} \big).
    \end{equation}
\end{itemize}

\begin{remark} 
  The above properties are standard and easily verified by the Galerkin method and the basic properties of the finite-dimensional BSDEs (see, e.g.,
  \cite[Chapter~5]{Pardoux2014}).
\end{remark}


Then we present two technical lemmas, which can be proved by straightforward
calculations.

\begin{lemma}
  \label{lem:auxi-1}
  For any $ 0 \leqslant j < J $,
  \begin{equation} 
    \label{eq:auxi-1}
    \sum_{k=j}^{J-1} \int_{t_k}^{t_{k+1}}
    \nm{
      e^{(t-t_j)A} - (I - \tau A)^{-(k-j+1)}
    }_{\mathcal L(H)}^2 \, \mathrm{d}t
    \leqslant c \tau.
  \end{equation}
\end{lemma}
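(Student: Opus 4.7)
The plan is to introduce the intermediate quantity $ e^{(k-j+1)\tau A} $ between the semigroup value $ e^{(t-t_j)A} $ and the rational approximation $ (I-\tau A)^{-(k-j+1)} $, so that the resulting second difference is controlled directly by \cref{eq:I-tauA-conv}, while the first difference is that of two semigroup values at nearby times and can be handled via analyticity. I would treat the summand for $ k = j $ separately: both $ e^{(t-t_j)A} $ (with $ 0 \leqslant t-t_j < \tau $) and $ (I-\tau A)^{-1} $ are contractions on $ H $, so their difference is uniformly bounded and the corresponding integral contributes at most $ c\tau $ to the total sum.

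For $ k > j $, write $ s := t-t_j $ and $ m := k-j+1 \geqslant 2 $, so that $ s \in [(m-1)\tau, m\tau) $. Applying \cref{eq:I-tauA-conv} with $ \beta = 0 $ yields
\[
  \nm{e^{m\tau A} - (I-\tau A)^{-m}}_{\mathcal L(H)} \leqslant c m^{-1},
\]
while the identity $ e^{sA} - e^{m\tau A} = -\int_s^{m\tau} A e^{rA} \, \mathrm{d}r $, combined with the analyticity estimate $ \nm{A e^{rA}}_{\mathcal L(H)} \leqslant c r^{-1} $ (a consequence of \cref{eq:etA} with $ \beta = 0 $ and $ \gamma = 1 $), gives
\[
  \nm{e^{sA} - e^{m\tau A}}_{\mathcal L(H)}
  \leqslant c \ln(m\tau/s)
  \leqslant c \ln\bigl(m/(m-1)\bigr)
  \leqslant c(k-j)^{-1}.
\]

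Adding these two bounds via the triangle inequality, squaring, and integrating over $ [t_k,t_{k+1}) $ (of length $ \tau $) produces a contribution of at most $ c\tau(k-j)^{-2} $ for each $ k > j $. Summing this with the $ k = j $ contribution and invoking the convergence of $ \sum_{n \geqslant 1} n^{-2} $ then bounds the whole sum by $ c\tau $, as claimed. The calculation is essentially routine; the only points requiring a little care are the choice $ \beta = 0 $ in \cref{eq:I-tauA-conv} and the use of analyticity to absorb the unbounded factor $ A $ into the semigroup with an $ r^{-1} $ singularity that remains integrable away from $ r = 0 $.
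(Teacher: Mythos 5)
Your proposal is correct and follows essentially the same route as the paper: insert the intermediate operator $e^{(t_{k+1}-t_j)A}$, control the rational-approximation error by \cref{eq:I-tauA-conv} with $\beta=0$ to get the summable $c(k-j+1)^{-1}$ bound, treat the $k=j$ term by uniform boundedness, and bound the semigroup-difference term for $k>j$ by $c(k-j)^{-1}$ using analyticity. The only cosmetic difference is that you derive the latter bound from the integral representation $e^{sA}-e^{m\tau A}=-\int_s^{m\tau}Ae^{rA}\,\mathrm{d}r$ with $\nm{Ae^{rA}}_{\mathcal L(H)}\leqslant cr^{-1}$, whereas the paper factors the difference as $(I-e^{(t_{k+1}-t)A})e^{(t-t_j)A}$ and combines \cref{eq:etA,eq:I-etA}; both yield the same $O(\tau/(t-t_j))$ estimate.
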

\begin{proof}
  We have
  \begin{align*} 
    & \sum_{k=j}^{J-1} \int_{t_k}^{t_{k+1}}
    \nm{
      e^{(t-t_j)A} - (I - \tau A)^{-(k-j+1)}
    }_{\mathcal L(H)}^2 \, \mathrm{d}t \notag \\
    ={}&
    \sum_{k=j}^{J-1} \int_{t_k}^{t_{k+1}}
    \nm{
      e^{(t-t_j)A} - e^{(t_{k+1} - t_j)A} +
      e^{(t_{k+1} - t_j)A} - (I - \tau A)^{-(k-j+1)}
    }_{\mathcal L(H)}^2 \, \mathrm{d}t \\
    \leqslant{} &
    2\sum_{k=j}^{J-1} \int_{t_k}^{t_{k+1}}
    \nm{e^{(t-t_j)A} - e^{(t_{k+1}-t_j)A}}_{\mathcal L(H)}^2 \,
    \mathrm{d}t + {} \\
    &
    \quad 2\sum_{k=j}^{J-1} \int_{t_k}^{t_{k+1}}
    \nm{e^{(t_{k+1}-t_j)A} - (I - \tau A)^{-(k-j+1)}}_{\mathcal L(H)}^2 \,
    \mathrm{d}t \\
    =: &
    \mathbb I_1 + \mathbb I_2.
  \end{align*}
  For $ \mathbb I_1 $ we have
  \begin{align*} 
    \mathbb I_1 & =
    2 \sum_{k=j}^{J-1} \int_{t_k}^{t_{k+1}}
    \nm{
      (I - e^{(t_{k+1} - t)A}) e^{(t-t_j)A}
    }_{\mathcal L(H)}^2 \, \mathrm{d}t \\
    & \leqslant
    2\int_{t_j}^{t_{j+1}}
    \nm{
      I - e^{(t_{j+1} - t)A }
    }_{\mathcal L(H)}^2
    \nm{ e^{(t-t_j)A} }_{\mathcal L(H)}^2
    \, \mathrm{d}t \\
    & \quad {} + 2\sum_{k=j+1}^{J-1} \int_{t_k}^{t_{k+1}} \nm{
      I - e^{(t_{k+1} - t)A }
    }_{\mathcal L(H^1, H)}^2
    \nm{ e^{(t-t_j)A} }_{\mathcal L(H,H^1)}^2
    \, \mathrm{d}t \\
    & \leqslant c \tau,
  \end{align*}
  by the following two estimates:
  \begin{align*}
    & \int_{t_j}^{t_{j+1}}
    \nm{
      I - e^{(t_{j+1} - t)A }
    }_{\mathcal L(H)}^2
    \nm{ e^{(t-t_j)A} }_{\mathcal L(H)}^2
    \, \mathrm{d}t \\
    \leqslant{} &
    c \int_{t_j}^{t_{j+1}} \, \mathrm{d}t
    \quad \text{(by \cref{eq:etA,eq:I-etA})} \\
    \leqslant{} &
    c \tau
  \end{align*}
  and
  \begin{align*}
    & \sum_{k=j+1}^{J-1} \int_{t_k}^{t_{k+1}}
    \nm{
      I- e^{(t_{k+1} - t)A}
    }_{\mathcal L(H^1, H)}^2
    \nm{ e^{(t-t_j)A} }_{\mathcal L(H, H^1)}^2 \, \mathrm{d}t \\
    \leqslant{} &
    c \sum_{k=j+1}^{J-1} \int_{t_k}^{t_{k+1}}
    (t_{k+1} - t)^2 (t-t_j)^{-2} \, \mathrm{d}t
    \quad \text{(by \cref{eq:etA,eq:I-etA})} \\
    \leqslant{} &
    c \tau^2 \sum_{k=j+1}^{J-1} \int_{t_k}^{t_{k+1}}
    (t-t_j)^{-2} \, \mathrm{d}t \\
    \leqslant{} &
    c \tau.
  \end{align*}
  For $ \mathbb I_2 $, by \cref{eq:I-tauA-conv} we obtain
  \[
    \mathbb I_2 \leqslant
    c \sum_{k=j}^{J-1} \tau (k-j+1)^{-2}
    \leqslant c \tau.
  \]
  Combining the above estimates of $ \mathbb I_1 $ and $ \mathbb I_2 $ yields
  \cref{eq:auxi-1} and thus completes the proof.
\end{proof}

\begin{lemma}
  \label{lem:Ap+g}
  Let $ (p,z) $ be the solution to equation \cref{eq:lin} with $ g \in L_\mathbb
  F^2(0,T; H) $ and $ p_T \in L^2(\Omega;H^{1/2}) $. Then
  \begin{equation} 
    \label{eq:Ap+g}
    \sum_{k=0}^{J-1} \Big(
    \int_{t_k}^{t_{k+1}} \ssnm{p(t)}_{H^1} \, \mathrm{d}t
    \Big)^2 \leqslant
    c \tau \big( \ssnm{p_T}_{H^{1/2}}^2 +
    \ssnm{g}_{L^2(0,T;H)}^2 \big).
  \end{equation}
\end{lemma}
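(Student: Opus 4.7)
The plan is to deduce the inequality from the already-stated regularity estimate \eqref{eq:lin-p-z-regu} via a single application of the Cauchy--Schwarz inequality on each subinterval, followed by summation. The estimate to prove has the flavor of an interval-by-interval Jensen/Cauchy--Schwarz bound, so essentially no new ingredient is needed beyond the stated $L^2(0,T;H^1)$ regularity of the mild solution $p$.

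Concretely, I would first apply the Cauchy--Schwarz inequality in time on each subinterval $[t_k,t_{k+1}]$ to obtain
\[
\Big(\int_{t_k}^{t_{k+1}} \ssnm{p(t)}_{H^1} \, \mathrm{d}t\Big)^2 \leqslant \tau \int_{t_k}^{t_{k+1}} \ssnm{p(t)}_{H^1}^2 \, \mathrm{d}t.
\]
Summing this bound over $k=0,1,\dots,J-1$ telescopes the integrals into the full interval $[0,T]$, giving
\[
\sum_{k=0}^{J-1} \Big(\int_{t_k}^{t_{k+1}} \ssnm{p(t)}_{H^1} \, \mathrm{d}t\Big)^2 \leqslant \tau \int_0^T \ssnm{p(t)}_{H^1}^2 \, \mathrm{d}t = \tau \ssnm{p}_{L^2(0,T;H^1)}^2.
\]

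Then I would invoke the standard regularity estimate \eqref{eq:lin-p-z-regu} for the mild solution of the backward linear equation \eqref{eq:lin}, which bounds $\ssnm{p}_{L^2(0,T;H^1)}^2$ by $c\bigl(\ssnm{p_T}_{H^{1/2}}^2 + \ssnm{g}_{L^2(0,T;H)}^2\bigr)$, and substitute to conclude \eqref{eq:Ap+g}. There is no real obstacle here; the lemma is essentially a packaging of the $L^2$-in-time regularity of $p$ with a Cauchy--Schwarz estimate, tailored to the form in which it will be consumed later in the convergence proof (namely, as a term of order $\tau$ to be paired with truncation-type errors of order $\tau^{1/2}$).
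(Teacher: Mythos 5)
Your proof is correct and follows essentially the same route as the paper: a per-interval Cauchy--Schwarz bound giving the factor $\tau$, followed by the $L^2(0,T;H^1)$ regularity of $p$. The only cosmetic difference is that the paper derives that regularity on the spot by writing $p(t)=\mathbb E_t\eta(t)$ with $\eta(t)=e^{(T-t)A}p_T+\int_t^Te^{(s-t)A}g(s)\,\mathrm{d}s$ and using the contractivity of conditional expectation, whereas you cite the already-stated estimate \cref{eq:lin-p-z-regu} directly, which is equally valid.
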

\begin{proof}
  Let
  \[
    \eta(t) := e^{(T-t)A} p_T + \int_t^T e^{(s-t)A} g(s) \, \mathrm{d}s
    \quad \forall 0 \leqslant t \leqslant T.
  \]
  It is standard that
  \begin{equation} 
    \label{eq:eta}
    \ssnm{\eta}_{L^2(0,T;H^1)}
    \leqslant c\big( \ssnm{p_T}_{H^{1/2}} + \ssnm{g}_{L^2(0,T;H)} \big).
  \end{equation}
  By \cref{eq:lin-mild-form} we have, for any $ 0 \leqslant t < T $,
  \begin{align*}
    \ssnm{p(t)}_{H^1} &=
    \ssnm{ \mathbb E_t \eta(t) }_{H^1} \leqslant
    \ssnm{ \eta(t) }_{H^1},
  \end{align*}
  so that
  \begin{align*} 
    & \sum_{k=0}^{J-1} \Big(
      \int_{t_k}^{t_{k+1}} \ssnm{p(t)}_{H^1} \, \mathrm{d}t
    \Big)^2 \leqslant
    \sum_{k=0}^{J-1} \tau  \int_{t_k}^{t_{k+1}}
    \ssnm{p(t)}_{H^1}^2 \, \mathrm{d}t \\
    \leqslant{} &
    \sum_{k=0}^{J-1} \tau \int_{t_k}^{t_{k+1}}
    \ssnm{\eta(t)}_{H^1}^2 \, \mathrm{d}t =
    \tau\ssnm{\eta}_{L^2(0,T;H^1)}^2,
  \end{align*}
  which, together with \cref{eq:eta}, proves the desired estimate \cref{eq:Ap+g}.
\end{proof}

Finally, we are in a position to prove \cref{lem:lin-conv} as follows.

\medskip\noindent{\bf Proof of \cref{lem:lin-conv}}. Firstly, let us prove
  \cref{eq:lin-conv-inf}. Let $ 0 \leqslant j < J $ be arbitrary but fixed. From
  \cref{eq:P-Z-lin}, it is easily verified that
  \begin{equation}
    \label{eq:xy}
    P_j = \mathbb E_{t_j} \Big(
      (I-\tau A)^{-(J-j)} p_T + \sum_{k=j}^{J-1}
      \int_{t_k}^{t_{k+1}} (I - \tau A)^{-(k-j+1)} g(t) \, \mathrm{d}t
    \Big) \quad \mathbb P \text{-a.s.}
  \end{equation}
  Hence, by \cref{eq:lin-mild-form} we obtain
  \[ 
    p(t_j) - P_j = \mathbb I_1 + \mathbb I_2
    \quad \mathbb P \text{-a.s.,}
  \]
  where
  \begin{align*}
    \mathbb I_1 &:= \mathbb E_{t_j} \big(
      e^{(T-t_j)A} - (I - \tau A)^{-(J-j)}
    \big) p_T, \\
    \mathbb I_2 &:= \mathbb E_{t_j} \Big(
      \sum_{k=j}^{J-1} \int_{t_k}^{t_{k+1}}
      \big( e^{(t-t_j)A} - (I - \tau A)^{-(k-j+1)} \big)
      g(t) \, \mathrm{d}t
    \Big).
  \end{align*}
  For $ \mathbb I_1 $ we have
  \begin{align*} 
    \ssnm{\mathbb I_1}_{H} &=
    \ssnm{
      \mathbb E_{t_j} \Big(
        e^{(T-t_j)A} - (I - \tau A)^{-(J-j)}
      \Big) p_T
    }_{H} \\
    & \leqslant
    \ssnm{
      \big(
        e^{(T-t_j)A} - (I - \tau A)^{-(J-j)}
      \big) p_T
    }_{H} \\
    & \leqslant
    \nm{
      e^{(T-t_j)A} - (I - \tau A)^{-(J-j)}
    }_{\mathcal L(H^{1/2}, H)} \ssnm{p_T}_{H^{1/2}} \\
    & \leqslant c \tau^{1/2}
    (J-j)^{-1/2} \ssnm{p_T}_{H^{1/2}}
    \quad\text{(by \cref{eq:I-tauA-conv}).}
  \end{align*}
  For $ \mathbb I_2 $ we have
  \begin{small}
  \begin{align*}
    \ssnm{\mathbb I_2}_{H} &=
    \ssnmB{
      \mathbb E_{t_j} \Big(
        \sum_{k=j}^{J-1} \int_{t_k}^{t_{k+1}}
        \big(
          e^{(t-t_j)A} - (I - \tau A)^{-(k-j+1)}
        \big) g(t) \, \mathrm{d}t
      \Big)
    }_{H} \\
    & \leqslant
    \ssnmB{
      \sum_{k=j}^{J-1} \int_{t_k}^{t_{k+1}}
      \big(
        e^{(t-t_j)A} - (I - \tau A)^{-(k-j+1)}
      \big) g(t) \, \mathrm{d}t
    }_{H} \\
    &\leqslant
    \sum_{k=j}^{J-1} \int_{t_k}^{t_{k+1}}
    \ssnmb{
      ( e^{(t-t_j)A} - (I-\tau A)^{-(k-j+1)} )g(t)
    }_{H} \, \mathrm{d}t \\
    &\leqslant
    \sum_{k=j}^{J-1} \int_{t_k}^{t_{k+1}}
    \nm{
      e^{(t-t_j)A} - (I-\tau A)^{-(k-j+1)}
    }_{\mathcal L(H)}
    \ssnm{g(t)}_{H} \, \mathrm{d}t \\
    & \leqslant
    \Big(
      \sum_{k=j}^{J-1} \int_{t_k}^{t_{k+1}}
      \nm{
        e^{(t-t_j)A} - (I - \tau A)^{-(k-j+1)}
      }_{\mathcal L(H)}^2 \, \mathrm{d}t
    \Big)^{1/2} \ssnm{g}_{L^2(t_j,T;H)} \\
    & \leqslant c \tau^{1/2}
    \ssnm{g}_{L^2(t_j,T;H)}
    \quad \text{(by \cref{lem:auxi-1})}.
  \end{align*}
  \end{small}
  Combining the above estimates of $ \mathbb I_1 $ and $ \mathbb I_2 $ then
  yields \cref{eq:lin-conv-inf}.

  Secondly, let us prove \cref{eq:lin-conv-p-Pj+1}. For any $ 0 \leqslant j < J
  $, by \cref{eq:lin-strong} we have
  \begin{align*}
    p(t) - p(t_{j+1}) = \int_t^{t_{j+1}}
    (Ap + g)(s) \, \mathrm{d}s - \int_t^{t_{j+1}}
    z(s) \, \mathrm{d}W(s), \, t_j \leqslant t < t_{j+1},
  \end{align*}
  and so
  \begin{align*}
    & \ssnm{p - p(t_{j+1})}_{L^2(t_j,t_{j+1};H)}^2 \\
    \leqslant{} &
    2 \int_{t_j}^{t_{j+1}} \ssnm{
      \int_t^{t_{j+1}} (Ap + g)(s) \, \mathrm{d}s
    }_H^2 \, \mathrm{d}t + 2 \int_{t_j}^{t_{j+1}} \ssnm{
      \int_t^{t_{j+1}} z(s) \, \mathrm{d}W(s)
    }_H^2 \, \mathrm{d}t \\
    ={} &
    2 \int_{t_j}^{t_{j+1}} \ssnm{
      \int_t^{t_{j+1}} (Ap + g)(s) \, \mathrm{d}s
    }_H^2 \, \mathrm{d}t + 2 \int_{t_j}^{t_{j+1}}
    \int_t^{t_{j+1}} \ssnm{z(s)}_H^2 \, \mathrm{d}s \, \mathrm{d}t \\
    \leqslant{} &
    2\tau^2 \ssnm{Ap + g}_{L^2(t_j,t_{j+1};H)}^2 +
    2\tau \ssnm{z}_{L^2(t_j,t_{j+1};H)}^2,
  \end{align*}
  It follows that
  \begin{align*}
    \sum_{j=0}^{J-1} \ssnm{p - p(t_{j+1})}_{L^2(t_j,t_{j+1};H)}^2
    & \leqslant
    2\tau^2 \ssnm{Ap + g}_{L^2(0,T;H)}^2 +
    2\tau \ssnm{z}_{L^2(0,T;H)}^2,
  \end{align*}
  and hence by \cref{eq:lin-p-z-regu} we obtain
  \begin{equation}
    \label{eq:lin2-p-pj}
    \sum_{j=0}^{J-1}
    \int_{t_j}^{t_{j+1}} \ssnm{p(t) - p(t_{j+1})}_{H}^2
    \, \mathrm{d}t
    \leqslant
    c \tau \big(
      \ssnm{p_T}_{H^{1/2}}^2 +
      \ssnm{g}_{L^2(0,T;H)}^2
    \big).
  \end{equation}
  Using the above estimate and \cref{eq:lin-conv-inf} yields
  \begin{align*}
    & \sum_{j=0}^{J-1} \ssnm{p - P_{j+1}}_{L^2(t_j,t_{j+1};H)}^2 \\
    ={} &
    \sum_{j=0}^{J-1} \ssnm{
      p - p(t_{j+1}) + p(t_{j+1}) - P_{j+1}
    }_{L^2(t_j,t_{j+1};H)}^2 \\
    \leqslant{} &
    2 \sum_{j=0}^{J-1} \ssnm{p - p(t_{j+1})}_{L^2(t_j,t_{j+1};H)}^2 +
    2 \sum_{j=0}^{J-1} \ssnm{p(t_{j+1}) - P_{j+1}}_{L^2(t_j,t_{j+1};H)}^2 \\
    \leqslant{} &
    c \tau \big(
      \ssnm{p_T}_{H^{1/2}}^2 +
      \ssnm{g}_{L^2(0,T;H)}^2
    \big),
  \end{align*}
  which implies the desired estimate \cref{eq:lin-conv-p-Pj+1}.

 Thirdly, let us prove \cref{eq:lin-conv-z-Z}. Fix $ 0 \leqslant j < J $. By
 \cref{eq:lin-strong} we have
 \begin{equation} 
   \label{eq:30000}
    p(t_{j+1}) + \int_{t_j}^{t_{j+1}} g(t) \, \mathrm{d}t =
    p(t_{j}) - \int_{t_j}^{t_{j+1}} Ap(t) \, \mathrm{d}t  +
    \int_{t_j}^{t_{j+1}} z(t) \, \mathrm{d}W(t)
    \quad \mathbb P \text{-a.s.,}
 \end{equation}
  so that, $ \mathbb P $-a.s.,
  \begin{small}
  \begin{align*} 
    & (I - \mathbb E_{t_j}-\delta W_j \mathcal I_\tau^j)
    \Big(
      p(t_{j+1}) + \int_{t_j}^{t_{j+1}} g(t) \, \mathrm{d}t
    \Big) \\
    ={} &
    (I - \mathbb E_{t_j}-\delta W_j \mathcal I_\tau^j)
    \Big(
      p(t_j) - \int_{t_j}^{t_{j+1}} Ap(t) \, \mathrm{d}t +
      \int_{t_j}^{t_{j+1}} z(t) \, \mathrm{d}W(t)
    \Big) \\
    ={} &
    -\delta W_j \mathcal I_\tau^j p(t_j) -
    (I - \mathbb E_{t_j} - \delta W_j \mathcal I_\tau^j)
    \int_{t_j}^{t_{j+1}} Ap(t) \, \mathrm{d}t +
    (I - \delta W_j \mathcal I_\tau^j)
    \int_{t_j}^{t_{j+1}} z(t) \, \mathrm{d}W(t) \\
    ={} &
    -(I - \mathbb E_{t_j} - \delta W_j \mathcal I_\tau^j)
    \int_{t_j}^{t_{j+1}} Ap(t) \, \mathrm{d}t +
    (I - \delta W_j \mathcal I_\tau^j)
    \int_{t_j}^{t_{j+1}} z(t) \, \mathrm{d}W(t)
    \quad \text{(by \cref{eq:Itauj-pro-1})}.
  \end{align*}
  \end{small}
  It follows that
  \begin{align*} 
    & \big(
      I - \mathbb E_{t_j} - \delta W_j \mathcal I_\tau^j
    \big) \Big(
      p(t_{j+1}) + \int_{t_j}^{t_{j+1}} g(t) \, \mathrm{d}t
    \Big) + (I - \mathbb E_{t_j}) \int_{t_j}^{t_{j+1}}
    Ap(t) \, \mathrm{d}t \\
    ={} &
    \delta W_j \mathcal I_\tau^j \int_{t_j}^{t_{j+1}} Ap(t) \, \mathrm{d}t
    + (I - \delta W_j \mathcal I_\tau^j) \int_{t_j}^{t_{j+1}}
    z(t) \, \mathrm{d}W(t) \quad \mathbb P \text{-a.s.},
  \end{align*}
  which further implies
  \begin{small}
  \begin{equation} 
    \label{eq:40000}
    \begin{aligned}
      & (I - \mathbb E_{t_j}) \Big(
        p(t_{j+1}) + \int_{t_j}^{t_{j+1}} (Ap + g)(t) \, \mathrm{d}t
      \Big) \\
      ={} &
      \delta W_j \mathcal I_\tau^j \Big(
        p(t_{j+1}) + \int_{t_j}^{t_{j+1}} (Ap + g)(t) \, \mathrm{d}t
      \Big) + (I - \delta W_j \mathcal I_\tau^j)
      \int_{t_j}^{t_{j+1}} z(t) \, \mathrm{d}W(t)
      \quad \mathbb P \text{-a.s.}
    \end{aligned}
  \end{equation}
  \end{small}
  By \cref{eq:30000} we also have
  \begin{align*} 
    \int_{t_j}^{t_{j+1}} z(t) \, \mathrm{d}W(t) &=
    (I - \mathbb E_{t_j}) \Big(
      p(t_{j+1}) - p(t_j) + \int_{t_j}^{t_{j+1}}
      (Ap+g)(t) \, \mathrm{d}t
    \Big) \\
    &= (I - \mathbb E_{t_j}) \Big(
      p(t_{j+1}) + \int_{t_j}^{t_{j+1}}
      (Ap+g)(t) \, \mathrm{d}t
    \Big) \quad \mathbb P \text{-a.s.,}
  \end{align*}
  which, together with \eqref{eq:P-Z-2} and  \cref{eq:Itauj-pro-2}, implies $ \mathbb P $-a.s.
  \begin{align*}
    & \int_{t_j}^{t_{j+1}} (z-Z)(t) \, \mathrm{d}W(t) \\
    ={} &
    (I - \mathbb E_{t_j}) \Big(
      p(t_{j+1}) + \int_{t_j}^{t_{j+1}} (Ap + g)(t) \, \mathrm{d}t
    \Big) -
    \delta W_j \mathcal I_\tau^j \Big(
      P_{j+1} + \int_{t_j}^{t_{j+1}} g(t) \, \mathrm{d}t
    \Big) \\
    ={} &
    \delta W_j \mathcal I_\tau^j \Big(
      p(t_{j+1}) \!-\! P_{j+1} \!+\! \int_{t_j}^{t_{j+1}} \! Ap(t) \mathrm{d}t
    \Big) + (I \!-\! \delta W_j \mathcal I_\tau^j) \!
    \int_{t_j}^{t_{j+1}} \!
    z(t) \mathrm{d}W(t) \quad \text{(by \cref{eq:40000})} \\
    ={} &
    \delta W_j \mathcal I_\tau^j \Big(
      p(t_{j+1}) \!-\! P_{j+1} \!+\!
      \int_{t_j}^{t_{j+1}} Ap(t) \mathrm{d}t
    \Big) \!+\! (I \!-\! \delta W_j \mathcal I_\tau^j)
    \int_{t_j}^{t_{j+1}}
    (z \!-\! \mathcal P_\tau z)(t) \mathrm{d}W(t).
  \end{align*}
   Hence,
  \begin{small}
  \begin{align*}
    & \ssnmB{
      \int_{t_j}^{t_{j+1}} (z-Z)(t) \, \mathrm{d}W(t)
    }_H^2 \\
    ={} & \ssnmB{
      \delta W_j \mathcal I_\tau^j \Big(
        p(t_{j+1}) \!-\! P_{j+1} \!+\!
        \int_{t_j}^{t_{j+1}} \!\! Ap(t) \mathrm{d}t \!
      \Big) \!
    }_H^2 \!+\! \ssnmB{
      (I \!-\! \delta W_j \mathcal I_\tau^j)
      \int_{t_j}^{t_{j+1}} \!\! (z \!-\! \mathcal P_\tau z)(t) \mathrm{d}W(t)
    }_H^2  \\ 
    \leqslant{} & \ssnmB{
        p(t_{j+1}) - P_{j+1} + \int_{t_j}^{t_{j+1}} Ap(t) \, \mathrm{d}t
    }_H^2 + \ssnmB{
      \int_{t_j}^{t_{j+1}} (z-\mathcal P_\tau z)(t) \, \mathrm{d}W(t)
    }_H^2 \quad \text{(by \cref{eq:Itauj-pro-4})} \\
    ={} & \ssnmB{
        p(t_{j+1}) - P_{j+1} + \int_{t_j}^{t_{j+1}} Ap(t) \, \mathrm{d}t
      }_H^2 + \ssnm{(I - \mathcal P_\tau)z}_{L^2(t_j,t_{j+1};H)}^2 \\
    \leqslant{} &
    2 \ssnm{p(t_{j+1}) - P_{j+1}}_H^2 +
    2 \ssnmB{ \int_{t_j}^{t_{j+1}} Ap(t) \, \mathrm{d}t }_H^2 +
    \ssnm{(I - \mathcal P_\tau) z}_{L^2(t_j,t_{j+1};H)}^2,
  \end{align*}
  \end{small}
  \hskip -4pt where we have used the property \cref{eq:Itauj-pro-3} in the first
  equality. Since $ 0 \leqslant j < J $ is arbitrary, summing over $ j $ from $
  0 $ to $ J-1 $ leads to
  \begin{small}
  \begin{align*} 
    & \sum_{j=0}^{J-1} \ssnmB{
      \int_{t_j}^{t_{j+1}} (z-Z)(t) \, \mathrm{d}W(t)
    }_H^2 \\
    \leqslant{} &
    2 \sum_{j=0}^{J-1} \ssnm{p(t_{j+1}) - P_{j+1}}_H^2 +
    2 \sum_{j=0}^{J-1} \ssnmB{ \int_{t_j}^{t_{j+1}} Ap(t) \, \mathrm{d}t }_H^2
    + \ssnm{(I - \mathcal P_\tau)z}_{L^2(0,T;H)}^2 \\
    \leqslant{} &
    2 \sum_{j=0}^{J-1} \ssnm{p(t_{j+1}) - P_{j+1}}_H^2 +
    2 \sum_{j=0}^{J-1} \Big(
      \int_{t_j}^{t_{j+1}} \ssnm{Ap(t)}_H \, \mathrm{d}t
    \Big)^2 + \ssnm{(I - \mathcal P_\tau)z}_{L^2(0,T;H)}^2 \\
    ={} &
    2 \sum_{j=0}^{J-1} \ssnm{p(t_{j+1}) - P_{j+1}}_H^2 +
    2 \sum_{j=0}^{J-1} \Big(
      \int_{t_j}^{t_{j+1}} \ssnm{p(t)}_{H^1} \, \mathrm{d}t
    \Big)^2 + \ssnm{(I - \mathcal P_\tau)z}_{L^2(0,T;H)}^2.
  \end{align*}
  \end{small}
  which, together with the equality
  \[ 
    \sum_{j=0}^{J-1} \ssnmB{
      \int_{t_j}^{t_{j+1}} (z-Z)(t) \, \mathrm{d}W(t)
    }_H^2 = \sum_{j=0}^{J-1}
    \ssnm{z-Z}_{L^2(t_j,t_{j+1};H)}^2 =
    \ssnm{z-Z}_{L^2(0,T;H)}^2,
  \]
  implies
  \begin{equation}
    \label{eq:43}
    \begin{aligned}
      & \ssnm{z-Z}_{L^2(0,T;H)}^2 \\
      \leqslant{} &
      2 \sum_{j=0}^{J-1} \ssnm{p(t_{j+1}) - P_{j+1}}_H^2 +
      2 \sum_{j=0}^{j-1} \Big(
      \int_{t_j}^{t_{j+1}} \ssnm{p(t)}_{H^1} \, \mathrm{d}t
      \Big)^2 + \ssnm{(I - \mathcal P_\tau)z}_{L^2(0,T;H)}^2.
    \end{aligned}
  \end{equation}
  By \cref{eq:xy}, the fact
  \[
    p(t) = \mathbb E_t \Big(
    e^{(T-t)A} p_T + \int_t^T e^{(s-t)A}g(s) \, \mathrm{d}s
    \Big), \quad 0 \leqslant t \leqslant T,
  \]
  and \cref{eq:auxi}, we obtain
  \begin{equation}
    \label{eq:732}
    \sum_{j=0}^{J-1} \ssnm{p(t_{j+1}) - P_{j+1}}_H^2
    \leqslant c \tau \big(
    \ssnm{p_T}_{H^{1/2}}^2 + \ssnm{g}_{L^2(0,T;H)}^2
    \big).
  \end{equation}
  Finally, combining \cref{eq:43,eq:732,eq:Ap+g} proves \cref{eq:lin-conv-z-Z}
  and thus concludes the proof of \cref{lem:lin-conv}.
\hfill\ensuremath{
  \vbox{\hrule height0.6pt\hbox{%
    \vrule height1.3ex width0.6pt\hskip0.8ex
    \vrule width0.6pt}\hrule height0.6pt
  }
}

\subsection{Proof of Theorem 3.1} 
For any $ 0 \leqslant j < J $, since \cref{eq:Itauj-pro-4} implies
\[ 
  \tau^{1/2} \ssnm{\mathcal I_\tau^jv}_H =
  \ssnm{\delta W_j \mathcal I_\tau^j v}_H \leqslant
  \ssnm{v}_{H} \quad \forall v \in L^2(\Omega; H),
\]
we obtain
\[ 
  \nm{\mathcal I_\tau^j}_{
    \mathcal L(L^2(\Omega; H))
  } \leqslant \tau^{-1/2} \quad \forall 0 \leqslant j < J.
\]
By the above estimate, \cref{eq:f-Lips} and the condition $ \tau <
1/C_\text{L}^2 $, a straightforward contraction argument proves that the
temporal semi-discretization \cref{eq:discretization1} admits a unique solution
$ (P,Z) $. In the sequel, we will assume that $ \tau $ is sufficiently small;
otherwise, the error estimate \cref{eq:conv} is evident. We split the rest
of the proof into the following four steps.

{\it Step 1}. We present some preliminary notations and estimates. Let
\begin{equation}
  \label{eq:calM}
  \mathcal M := \tau + \ssnm{(I - \mathcal P_\tau)z}_{L^2(0,T;H)}^2.
\end{equation}
Define $ (\widetilde P, \widetilde Z) \in \mathcal X_\tau \times \mathcal X_\tau
$ by
\begin{subequations}
\label{eq:wtp-wtz}
\begin{numcases}{}
  \widetilde P_J = p_T, \\
  \widetilde Z_j = \mathcal I_\tau^j \Big(
    \widetilde P_{j+1} + \int_{t_j}^{t_{j+1}}
    f(t,p(t),z(t)) \, \mathrm{d}t
  \Big), \quad 0 \leqslant j < J, \\
  \widetilde P_j - \mathbb E_{t_j} \widetilde P_{j+1} =
  \tau A \widetilde P_j +
  \mathbb E_{t_j} \int_{t_j}^{t_{j+1}} f(t,p(t),z(t)) \, \mathrm{d}t,
  \,\, 0 \leqslant j < J.
\end{numcases}
\end{subequations}
In view of $ p_T \in H^{1/2} $ and the fact
\[
  f(\cdot,p(\cdot),z(\cdot)) \in L_\mathbb F^2(0,T;H),
\]
by \cref{lem:lin-conv} we obtain
\begin{equation} 
  \label{eq:p-wtP}
  \begin{aligned}
    & \max_{0 \leqslant k < J} \ssnmb{p(t_k) - \widetilde P_k}_{H} +
    \Big(
      \sum_{k=0}^{J-1} \ssnmb{p - \widetilde P_{k+1}}_{L^2(t_k,t_{k+1};H)}^2
    \Big)^{1/2}  \\
    & \qquad\qquad\qquad {} +
    \ssnmb{z - \widetilde Z}_{L^2(0,T;H)}
    \leqslant c \mathcal M^{1/2}.
  \end{aligned}
\end{equation}
Letting $ E^P := P - \widetilde P $ and $ E^Z := Z - \widetilde
Z $, from \eqref{eq:discretization1} and \eqref{eq:wtp-wtz} we conclude that
\begin{small} 
\begin{subequations}
\label{eq:EP-EZ}
\begin{numcases}{}
  E_J^P = 0, \label{eq:EJP} \\
  E^Z_j = \mathcal I_\tau^j \Big(
    E^P_{j+1} + \int_{t_j}^{t_{j+1}} G(t,E_{j+1}^P,E^Z_j) \, \mathrm{d}t
  \Big), \quad 0 \leqslant j < J, \label{eq:EZ-ZE-j} \\
  E^P_j - \mathbb E_{t_j} E^P_{j+1} =
  \tau A E^P_j +
  \mathbb E_{t_j} \int_{t_j}^{t_{j+1}}  G(t,E^P_{j+1},E^Z_j) \, \mathrm{d}t,
  \, 0 \leqslant j < J, \label{eq:EP-EZ-j}
\end{numcases}
\end{subequations}
\end{small}
where
\begin{equation}
  \label{eq:G-def}
  G(t,E_{j+1}^P, E_j^Z) := f(t, E_{j+1}^P + \widetilde P_{j+1},
  E_j^Z + \widetilde Z_j)
  - f(t, p(t), z(t))
\end{equation}
for all $ t_j \leqslant t < t_{j+1} $ with $ 0 \leqslant j < J $.
We have, for any $ 0 \leqslant k < J $,
\begin{small}
  \begin{align*} 
    & \int_{t_k}^{t_{k+1}}
    \ssnm{G(t,E_{k+1}^P,E_k^Z)}_{H}^2 \, \mathrm{d}t \\
    ={}& \int_{t_k}^{t_{k+1}}
    \ssnmb{
      f(t, E_{k+1}^P + \widetilde P_{k+1},
      E_k^Z + \widetilde Z_k) - f(t, p(t), z(t))
    }_{H}^2 \, \mathrm{d}t \quad \text{(by \cref{eq:G-def})} \\
    \leqslant{} &
    c \int_{t_k}^{t_{k+1}}
    \ssnmb{E_{k+1}^P + \widetilde P_{k+1} - p(t)}_{H}^2 +
    \ssnmb{E_k^Z + \widetilde Z_k - z(t)}_{H}^2 \, \mathrm{d}t
    \quad \text{(by \cref{eq:f-Lips})} \\
    \leqslant{} &
    c \Big(
    \tau \ssnmb{E_{k+1}^P}_H^2 +
    \ssnmb{E^Z}_{L^2(t_k,t_{k+1};H)}^2 +
    \ssnmb{p \!-\! \widetilde P_{k+1}}_{L^2(t_k,t_{k+1};H)}^2 +
    \ssnmb{z \!-\! \widetilde Z}_{L^2(t_k,t_{k+1};H)}^2
    \Big).
  \end{align*}
\end{small}
Hence, for each $ 0 \leqslant j < J $,
\begin{small}
\begin{align*} 
  & \sum_{k=j}^{J-1} \int_{t_k}^{t_{k+1}}
  \ssnmb{G(t, E_{k+1}^P, E_k^Z)}_{H}^2 \, \mathrm{d}t \\
  \leqslant{} & c \Big(
    \sum_{k=j}^{J-1} \tau \ssnmb{E_{k+1}^P}_H^2 +
    \ssnmb{E^Z}_{L^2(t_j,T;H)}^2 + \sum_{k=j}^{J-1}
    \ssnmb{p \!-\! \widetilde P_{k+1}}_{L^2(t_k,t_{k+1};H)}^2 +
    \ssnmb{z \!-\! \widetilde Z}_{L^2(0,T;H)}^2 \!
  \Big),
\end{align*}
\end{small}
which, together with the fact $ E^P_J = 0 $ and \cref{eq:p-wtP}, leads to
\begin{small}
\begin{equation} 
  \label{eq:G-esti}
  \begin{aligned}
    & \sum_{k=j}^{J-1} \int_{t_k}^{t_{k+1}}
    \ssnmb{G(t, E_{k+1}^P, E_k^Z)}_{H}^2 \, \mathrm{d}t \\
    \leqslant{} & c \Big(
    \ssnmb{E^P}_{L^2(t_j,T;H)}^2 +
    \ssnmb{E^Z}_{L^2(t_j,T;H)}^2 +
    \mathcal M
    \Big).
  \end{aligned}
\end{equation}
\end{small}

{\it Step 2}. Let us prove that, for any $ 0 \leqslant j < J $,
\begin{equation}
  \label{eq:E^Z-stab}
  \begin{aligned}
    \ssnmb{E^Z}_{L^2(t_j,T;H)} & \leqslant
    c \Big(
      \mathcal M^{1/2} +
      \ssnm{E^P_J}_{H^{1/2}} +
      \ssnm{E^P}_{L^2(t_j,T;H)} \\
      & \qquad\qquad\qquad\qquad {} +
      \sqrt{T-t_j} \ssnm{E^Z}_{L^2(t_j,T;H)}
    \Big).
  \end{aligned}
\end{equation}
For each $ 0 \leqslant j < J $, define
\begin{equation} 
  \label{eq:etaj-def}
  \eta_j := E^P_{j+1} - E^P_{j} + \tau A E^P_j +
  \int_{t_j}^{t_{j+1}} G(t,E^P_{j+1},E^Z_j) \, \mathrm{d}t.
\end{equation}
Using \cref{eq:Itauj-pro-3}, \eqref{eq:EZ-ZE-j} and the fact
\[ 
  \big[ \tau A E^P_j - E^P_j, \, E^Z_j \delta W_j \big] = 0,
\]
we obtain
\[ 
  \big[
    \eta_j - E^Z_j \delta W_j, \,
    E^Z_j \delta W_j
  \big] = 0
  \quad \text{for all} \quad 0 \leqslant j < J.
\]
For any $ 0 \leqslant k \neq j < J $, since \eqref{eq:EP-EZ-j} implies $ \mathbb
E_{t_j} \eta_j = 0 $ $ \mathbb P $-a.s., it is easily verified that
\[ 
  \big[
    \eta_j - E^Z_j \delta W_j, \,
    E^Z_k \delta W_k
  \big] = 0.
\]
Consequently,
\[ 
  \bigg[
    \sum_{k=j}^{J-1} \eta_k - \sum_{k=j}^{J-1} E^Z_k \delta W_k, \,
    \sum_{k=j}^{J-1} E^Z_k \delta W_k
  \bigg] = 0 \quad \forall 0 \leqslant j < J.
\]
It follows that, for any $ 0 \leqslant j < J $,
\begin{align*} 
  & \ssnmB{
    \sum_{k=j}^{J-1} E^Z_k \delta W_k
  }_{H}^2 =
  \Big[
    \sum_{k=j}^{J-1}
    \eta_k, \, \sum_{k=j}^{J-1} E^Z_k \delta W_k
  \Big] \\
  ={} &
  \Big[
    E^P_J - E^P_j + \sum_{k=j}^{J-1} \tau A E^P_k +
    \sum_{k=j}^{J-1} \int_{t_k}^{t_{k+1}}
    G(t, E^P_{k+1},E^Z_k) \, \mathrm{d}t, \,
    \sum_{k=j}^{J-1} E^Z_k \delta W_k
  \Big] \quad\text{(by \cref{eq:etaj-def})} \\
  ={} &
  \Big[
    E^P_J + \sum_{k=j}^{J-1} \tau A E^P_k +
    \sum_{k=j}^{J-1} \int_{t_k}^{t_{k+1}}
    G(t, E^P_{k+1},E^Z_k) \, \mathrm{d}t, \,
    \sum_{k=j}^{J-1} E^Z_k \delta W_k
  \Big] \\
  \leqslant{} &
  \Big(
    \ssnmb{E^P_J}_{H} \!+\!
    \tau \sum_{k=j}^{J-1} \ssnmb{E^P_k}_{H^1} \!+\!
    \sum_{k=j}^{J-1} \int_{t_k}^{t_{k+1}}
    \ssnmb{G(t,E^P_{k+1},E^Z_k)}_{H} \, \mathrm{d}t
  \Big) \ssnmB{
    \sum_{k=j}^{J-1} E^Z_k \delta W_k
  }_{H},
\end{align*}
which, together with the identity
\begin{align*} 
  \ssnmB{\sum_{k=j}^{J-1} E^Z_k \delta W_k}_H^2 =
  \sum_{k=j}^{J-1} \ssnmb{E^Z_k \delta W_k}_H^2 =
  \sum_{k=j}^{J-1} \tau \ssnmb{E_k^Z}_{H}^2 =
  \ssnmb{E^Z}_{L^2(t_j,T;H)}^2,
\end{align*}
implies
\begin{small}
\begin{align}
  \ssnmb{E^Z}_{L^2(t_j,T;H)}
  & \leqslant \ssnmb{E^P_J}_{H} +
  \tau \sum_{k=j}^{J-1} \ssnmb{E^P_k}_{H^1} +
  \sum_{k=j}^{J-1} \int_{t_k}^{t_{k+1}}
  \ssnmb{G(t,E^P_{k+1},E^Z_k)}_{H} \, \mathrm{d}t \notag \\
  & \leqslant
  \ssnmb{E^P_J}_{H} +
  \sqrt{T-t_j} \ssnmb{E^P}_{L^2(t_j,T;H^1)} + {} \notag \\
  & \qquad \sqrt{T-t_j} \Big(
    \sum_{k=j}^{J-1} \int_{t_k}^{t_{k+1}}
    \ssnmb{G(t,E^P_{k+1},E^Z_k)}_H^2 \, \mathrm{d}t
  \Big)^{1/2}. \label{eq:E^Z-1}
\end{align}
\end{small}
For any $ 0 \leqslant j < J $, it is easily verified by \cref{eq:EP-EZ} that
\begin{align*} 
  E_j^P = \mathbb E_{t_j} \Big(
    (I - \tau A)^{-(J-j)} E^P_J +
    \sum_{k=j}^{J-1} \int_{t_k}^{t_{k+1}}
    (I - \tau A)^{-(k-j+1)} G(t, E^P_{k+1},E^Z_k) \, \mathrm{d}t
  \Big),
\end{align*}
and so using \cref{eq:auxi_stab} gives
\begin{small}
\begin{equation}
  \label{eq:731}
  \begin{aligned}
    & \max_{j \leqslant k < J} \ssnmb{E^P_k}_{H^{1/2}}^2 +
    \ssnmb{E^P}_{L^2(t_j,T;H^1)}^2 \\
    \leqslant{} & c \Big(
      \ssnmb{E^P_J}_{H^{1/2}}^2 +
      \sum_{k=j}^{J-1} \int_{t_k}^{t_{k+1}}
      \ssnmb{G(t,E^P_{k+1}, E^Z_k)}_{H}^2 \, \mathrm{d}t
    \Big).
  \end{aligned}
\end{equation}
\end{small}
Combining \cref{eq:E^Z-1,eq:731} yields, for any $ 0 \leqslant j < J $,
\begin{small}
\[
  \ssnmb{E^Z}_{L^2(t_j,T;H)} \leqslant
  c \ssnmb{E^P_J}_{H^{1/2}} + c \sqrt{T-t_j}
  \Big(
    \sum_{k=j}^{J-1} \int_{t_k}^{t_{k+1}}
    \ssnmb{G(t,E^P_{k+1},E^Z_k)}_{H}^2 \, \mathrm{d}t
  \Big)^{1/2},
\]
\end{small}
so that from \cref{eq:G-esti} we conclude the desired estimate
\cref{eq:E^Z-stab}.

{\it Step 3}. Let $ c^* $ be a particular constant $ c $ in the inequality
\cref{eq:E^Z-stab}, and set
\[ 
  j^* := \min\big\{
    0 \leqslant j < J \mid c^* \sqrt{T-t_j} \leqslant 1/2
  \big\}.
\]
From \cref{eq:E^Z-stab} it follows that
\begin{equation}
  \label{eq:427}
  \ssnmb{E^Z}_{L^2(t_j,T;H)}^2 \leqslant
  c \big(
    \ssnmb{E^P_J}_{H^{1/2}}^2 +
    \ssnmb{E^P}_{L^2(t_j,T;H)}^2 +
    \mathcal M
  \big) \quad \forall j^* \leqslant j < J,
\end{equation}
and so by \cref{eq:G-esti,eq:731} we infer that
\begin{align*}
  \ssnm{E^P_j}_{H^{1/2}}^2 \leqslant
  c \big(
    \ssnmb{E^P_J}_{H^{1/2}}^2 +
    \ssnmb{E^P}_{L^2(t_j,T;H)}^2 + \mathcal M
  \big) \quad \forall j^* \leqslant j < J.
\end{align*}
Since $ H^{1/2} $ is continuously embedded into $ H $, we then obtain
\begin{align*}
  \ssnm{E^P_j}_{H^{1/2}}^2 \leqslant
  c \big(
    \ssnmb{E^P_J}_{H^{1/2}}^2 +
    \ssnmb{E^P}_{L^2(t_j,T;H^{1/2})}^2 + \mathcal M
  \big) \quad \forall j^* \leqslant j < J,
\end{align*}
and therefore using the discrete Gronwall's inequality yields
\[ 
  \max_{j^* \leqslant j \leqslant J}
  \ssnmb{E^P_j}_{H^{1/2}}^2 \leqslant
  c \big(
  \ssnmb{E^P_J}_{H^{1/2}}^2 + \mathcal M
  \big),
\]
which, together with \cref{eq:427}, leads to
\[ 
  \max_{j^* \leqslant j < J} \ssnmb{E^P_j}_{H^{1/2}} +
  \ssnmb{E^Z}_{L^2(t_{j^*}, T; H)}
  \leqslant c \ssnmb{E^P_J}_{H^{1/2}} +
  c \mathcal M^{1/2}.
\]
Hence, by the estimate $ \ssnmb{E_J^P}_{H^{1/2}} \leqslant c \mathcal M^{1/2} $
(in fact $ E_J^P = 0 $), we obtain
\begin{equation} 
  \max_{j^* \leqslant j < J} \ssnmb{E^P_j}_{H^{1/2}} +
  \ssnmb{E^Z}_{L^2(t_{j^*}, T; H)}
  \leqslant c \mathcal M^{1/2}.
\end{equation}

{\it Step 4}. Note that $ J/(J-j^*) $ is independent of $ \tau $. Repeating the
argument in Steps 2 and 3 several times (not greater than $ J/(J-j^*) $)
proves
\begin{equation} 
  \label{eq:conv-auxi-final}
  \max_{0 \leqslant j < J} \ssnmb{E^P_j}_{H^{1/2}} +
  \ssnmb{E^Z}_{L^2(0, T; H)}
  \leqslant c \mathcal M^{1/2},
\end{equation}
which, together with \cref{eq:p-wtP} and the fact that $ H^{1/2} $ is
continuously embedded into $ H $, yields the desired estimate \cref{eq:conv}.
This completes the proof of \cref{thm:conv1}.

\begin{remark}
  Assume that $ (P,Z) $ is the solution to \cref{eq:discretization1} and that $
  f $ satisfies $ (i) $ and $ (ii) $ in \cref{hypo:main}. Using the techniques
  in the proof of \cref{thm:conv1}, we can easily obtain the following stability
  estimate:
  \[ 
    \max_{0 \leqslant j \leqslant J}
    \ssnm{P_j}_{H^{1/2}} + \ssnm{Z}_{L^2(0,T;H)}
    \leqslant c \big(
      \ssnm{p_T}_{H^{1/2}} + \ssnm{f(\cdot,0,0)}_{L^2(0,T;H)}
    \big),
  \]
  provided that $ p_T \in H^{1/2} $. Moreover, we can use the estimate
  \cref{eq:conv-auxi-final}
  and the stability estimate of $ \widetilde P $ to further derive the stability
  estimate of $ P $ for $ p_T \in H $.
\end{remark}

\begin{remark}
  \label{rem:tmp}
  Following the proof of \cref{thm:conv1}, we can easily prove
  \cref{thm:conv2,thm:conv3} by \cref{lem:lin-conv2,lem:lin-conv3},
  respectively.
\end{remark}

\section{Application to a stochastic linear quadratic control problem}
\label{sec:application}
\subsection{Continuous problem}
We are concerned with the following stochastic linear quadratic control problem:
\begin{equation}
  \label{eq:optim}
  \min_{
    u \in L_\mathbb F^2(0,T;H)
  } \frac12 \ssnm{y-y_d}_{L^2(0,T; H)}^2 +
  \frac{\nu}2 \ssnm{u}_{L^2(0,T; H)}^2,
\end{equation}
subject to the state equation
\begin{equation}
  \label{eq:state}
  \begin{cases}
    \mathrm{d}y(t) = (A y + \alpha_0 y + \alpha_1 u)(t) \, \mathrm{d}t +
    (\alpha_2 y + \alpha_3 u)(t) \, \mathrm{d}W(t),
    & 0 \leqslant t \leqslant T, \\
    y(0) = 0,
  \end{cases}
\end{equation}
where $ 0 < \nu < \infty $, $ y_d \in L_\mathbb F^2(0,T;H) $ and
\[ 
  \alpha_0, \alpha_1, \alpha_2, \alpha_3 \in
  L_\mathbb F^2(0,T;\mathbb R) \cap
  L^\infty(\Omega \times (0,T)).
\]
It is standard that problem \cref{eq:optim} admits a
unique solution $ \bar u $. Let $ \bar y $ be the state with respect to the
control $ \bar u $, and let $ (\bar p, \bar z) $ be the solution of the backward
stochastic evolution equation
\begin{equation} 
  \label{eq:barp-barz}
  \begin{cases}
    \mathrm{d}\bar p(t) = -(
    A \bar p + \alpha_0 \bar p +
    \bar y - y_d + \alpha_2 \bar z
    )(t) \, \mathrm{d}t +
    \bar z(t) \, \mathrm{d}W(t), \quad 0 \leqslant t \leqslant T, \\
    \bar p(T) = 0.
  \end{cases}
\end{equation}
Applying the celebrated It\^o's formula to $ [y(\cdot), \bar p(\cdot)] $ yields
\[ 
  \int_0^T \big[ (\bar y - y_d)(t), y(t) \big]
  \, \mathrm{d}t =
  \int_0^T \big[
    (\alpha_1 \bar p + \alpha_3 \bar z)(t), u(t)
  \big] \, \mathrm{d}t
\]
for all $ u \in L_\mathbb F^2(0,T;H) $, where $ y $ is the state with respect to
the control $ u $. Using the above equality, we readily conclude the first-order
optimality condition of problem \cref{eq:optim}:
\begin{equation} 
  \label{eq:optim_cond}
  \bar u = - \nu^{-1}(\alpha_1 \bar p + \alpha_3 \bar z).
\end{equation}
Noting that $ (\bar p, \bar z) $ is the solution to \cref{eq:barp-barz}, we have
\begin{equation}
  \label{eq:barp-barz-regu}
  (\bar p, \bar z) \in \big(
  L_\mathbb F^2(\Omega;C([0,T];H^{1/2})) \cap L_\mathbb F^2(0,T;H^1)
  \big) \times L_\mathbb F^2(0,T;H^{1/2}),
\end{equation}
and so by \cref{eq:optim_cond} we get
\[
  \bar u \in L_\mathbb F^2(0,T;H^{1/2}).
\]
Since $ \bar y $ is the state with respect to the control $ \bar u $, we then
obtain
\begin{equation} 
  \label{eq:optim_regu}
  \bar y \in L_\mathbb F^2(\Omega;C([0,T];H^{1/2}))
  \cap L_\mathbb F^2(0,T;H^1).
\end{equation}

\begin{remark} 
  The first-order optimality condition \cref{eq:optim_cond} follows from
  \cite{Bensoussan1983,Bismut1973}. For the theoretical analysis of the
  stochastic linear quadratic control problems in infinite dimensions, we refer
  the reader to \cite{LuZhangbook2021} and the references therein.
\end{remark}

\begin{remark} 
  The regularity results \cref{eq:barp-barz-regu,eq:optim_regu} are
  straightforward by the Galerkin method and the standard theory of the
  stochastic differential equations and the backward stochastic differential
  equations (see~\cite[Chapters 3 and 5]{Pardoux2014}).
\end{remark}

\subsection{Temporally semi-discrete problem}
The temporally semi-discrete problem reads as follows:
\begin{equation}
  \label{eq:optim_h}
  \min_{
    U \in \mathcal X_\tau
  } \frac12 \ssnm{Y-y_d}_{L^2(0,T; H)}^2 +
  \frac{\nu}2 \ssnm{U}_{L^2(0,T; H)}^2,
\end{equation}
subject to the discrete state equation
\begin{equation} 
  \label{eq:state_h}
  \begin{cases}
    Y_{j+1} -  Y_j = \tau AY_{j+1} +
    \int_{t_j}^{t_{j+1}} (\alpha_0  Y + \alpha_1 U)(t)
    \, \mathrm{d}t + {} \\
    \qquad\qquad\qquad\qquad\qquad\qquad
    \int_{t_j}^{t_{j+1}}
    (\alpha_2  Y + \alpha_3 U)(t) \, \mathrm{d}W(t),
    \qquad 0 \leqslant j < J, \\
    Y_0 = 0,
  \end{cases}
\end{equation}
where $ Y \in \mathcal X_\tau $. The main result of this section is the
following error estimate.
\begin{theorem} 
  \label{thm:optim-conv}
  Assume that $ y_d \in L_\mathbb F^2(0,T;H) $. Let $ \bar u $ and $ \bar U $
  be the solutions to problems \cref{eq:optim,eq:optim_h}, respectively. Then
  \begin{equation} 
    \label{eq:optim-conv}
    \ssnm{\bar u - \bar U}_{L^2(0,T;H)}
    \leqslant c \big( \tau^{1/2} + \nm{(I - \mathcal P_\tau)\bar u}_{L^2(0,T;H)}
      \big).
  \end{equation}
\end{theorem}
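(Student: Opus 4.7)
The plan is to derive a discrete first-order optimality system mirroring the continuous one \eqref{eq:optim_cond}, and then combine strong convexity of the quadratic cost with the convergence result of \cref{thm:conv3} applied to the adjoint equation \eqref{eq:barp-barz}. First, I would introduce the discrete adjoint $(\bar P,\bar Z)\in \mathcal X_\tau\times L_\mathbb F^2(0,T;H)$ obtained by applying the third semi-discretization \eqref{eq:discretization3} to \eqref{eq:barp-barz} with $(\bar y,\bar u)$ replaced by $(\bar Y,\bar U)$. A discrete It\^o-type computation applied to $[\bar Y,\bar P]$ (the analogue of the integration by parts leading to \eqref{eq:optim_cond}) would then yield the discrete optimality condition
\begin{equation*}
  \nu \bar U + \mathcal P_\tau\bigl(\alpha_1 \bar P + \alpha_3 \bar Z\bigr) = 0 \quad \text{in } \mathcal X_\tau.
\end{equation*}
Subtracting this from the $\mathcal P_\tau$-projection of \eqref{eq:optim_cond} and using $\mathcal P_\tau \bar U=\bar U$ gives the key identity
\begin{equation*}
  \nu(\bar u-\bar U) = \nu(I-\mathcal P_\tau)\bar u + \mathcal P_\tau\bigl[\alpha_1(\bar p-\bar P)+\alpha_3(\bar z-\bar Z)\bigr],
\end{equation*}
which already contributes the projection term appearing on the right-hand side of \eqref{eq:optim-conv}.

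To decouple the adjoint error from the unknown control error, I would next introduce an auxiliary pair $(\widehat P,\widehat Z)$ defined by \eqref{eq:discretization3} for the continuous adjoint \eqref{eq:barp-barz}, that is, with $(\bar y,\bar u)$ unchanged. By the regularity \eqref{eq:barp-barz-regu}, the terminal value $\bar p(T)=0$ lies in $H^{1/2}$ and the source $\alpha_0\bar p+\bar y-y_d+\alpha_2\bar z$ lies in $L_\mathbb F^2(0,T;H)$, so \cref{thm:conv3} applies and yields
\begin{equation*}
  \max_{0\leqslant j<J}\ssnm{\bar p(t_j)-\widehat P_j}_{H} + \ssnm{\bar z-\widehat Z}_{L^2(0,T;H)} \leqslant c\tau^{1/2}.
\end{equation*}
The remaining difference $(\widehat P-\bar P,\widehat Z-\bar Z)$ satisfies a backward linear scheme of type \eqref{eq:discretization3} with source $\alpha_0(\widehat P-\bar P)+(\bar y-\bar Y)+\alpha_2(\widehat Z-\bar Z)$, so a stability estimate for \eqref{eq:discretization3} in the spirit of the remark following \cref{thm:conv1} would provide
\begin{equation*}
  \max_{0\leqslant j<J}\ssnm{\widehat P_j-\bar P_j}_{H} + \ssnm{\widehat Z-\bar Z}_{L^2(0,T;H)} \leqslant c\,\ssnm{\bar y-\bar Y}_{L^2(0,T;H)}.
\end{equation*}

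To close the loop, I would bound $\ssnm{\bar y-\bar Y}_{L^2(0,T;H)}$ by inserting the discrete state $\widetilde Y\in\mathcal X_\tau$ driven by the exact control $\bar u$: standard forward-Euler error estimates for semilinear SPDEs combined with the regularity \eqref{eq:optim_regu} give $\ssnm{\bar y-\widetilde Y}_{L^2(0,T;H)}\leqslant c\tau^{1/2}$, while discrete stability of the forward scheme controls $\ssnm{\widetilde Y-\bar Y}_{L^2(0,T;H)}$ by $c\,\ssnm{\bar u-\bar U}_{L^2(0,T;H)}$. Plugging all of these into the key identity produces, after Cauchy--Schwarz and the $L^\infty$ bounds on the $\alpha_i$, an inequality of the form
\begin{equation*}
  \ssnm{\bar u-\bar U}_{L^2(0,T;H)} \leqslant c\bigl(\tau^{1/2}+\ssnm{(I-\mathcal P_\tau)\bar u}_{L^2(0,T;H)}\bigr) + c\,\varepsilon(\tau)\,\ssnm{\bar u-\bar U}_{L^2(0,T;H)},
\end{equation*}
where $\varepsilon(\tau)\to 0$ as $\tau\to 0$, and rearranging for small $\tau$ yields \eqref{eq:optim-conv}. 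The main obstacle I anticipate is not the backward discretization estimate, which is delivered directly by \cref{thm:conv3}, but rather the bookkeeping needed to ensure that the coupling between state and adjoint errors produces an absorbable constant in front of $\ssnm{\bar u-\bar U}_{L^2(0,T;H)}$; should this fail on the whole interval, a time-slicing argument on $[t_{j^*},T]$ analogous to Steps~3--4 of the proof of \cref{thm:conv1} would deliver the same conclusion.
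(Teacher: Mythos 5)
There are two genuine gaps in your plan. First, the ``clean'' discrete optimality condition $\nu\bar U+\mathcal P_\tau(\alpha_1\bar P+\alpha_3\bar Z)=0$ that you derive from a discrete It\^o computation on $[\bar Y,\bar P]$ does not hold. Because the backward scheme \eqref{eq:discretization3} applied to \eqref{eq:barp-barz} is \emph{not} the exact adjoint of the implicit Euler state scheme \eqref{eq:state_h}, the discrete summation by parts leaves a nonzero residual: the actual optimality condition is \eqref{eq:optim_cond_h}, in which the functional $\mathscr S$ of \eqref{eq:scrS} carries the extra pairing
\begin{equation*}
 -\sum_{j=0}^{J-1}\Big[\int_{t_j}^{t_{j+1}}(\alpha_0 \bar P_{j+1}+g+\alpha_2\bar Z)(t)\,\mathrm{d}t,\ \int_{t_j}^{t_{j+1}}(\alpha_2 S_\tau v+\alpha_3 v)(t)\,\mathrm{d}W(t)\Big],
\end{equation*}
which the paper must estimate separately (it is the term $\mathbb I_3$, of size $O(\sqrt\tau)\,\ssnm{\mathcal P_\tau\bar u-\bar U}_{L^2(0,T;H)}$). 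Your key identity omits this term, so the starting point of your argument is incorrect as stated; this particular omission is repairable, since the residual is small, but it must be identified and bounded.

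The second gap is not repairable along the route you propose. Your chain of estimates gives
\begin{equation*}
 \nu\ssnm{\bar u-\bar U}_{L^2(0,T;H)}\leqslant c\tau^{1/2}+c\ssnm{(I-\mathcal P_\tau)\bar u}_{L^2(0,T;H)}+C_0\,\ssnm{\bar u-\bar U}_{L^2(0,T;H)},
\end{equation*}
where $C_0$ collects the $L^\infty$ bounds on $\alpha_1,\alpha_3$ and the stability constants of the forward and backward schemes; $C_0$ is $O(1)$, not $o(1)$ as $\tau\to0$, so your claimed factor $\varepsilon(\tau)\to0$ is unjustified and absorption fails unless $C_0<\nu$, which is not guaranteed. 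A time-slicing argument does not rescue this: the control error couples to the state error globally through the forward equation, so there is no $\sqrt{T-t_j}$ gain as in the proof of \cref{thm:conv1}. The paper avoids this entirely by testing the continuous optimality condition \eqref{eq:optim_cond} with $\bar u-\bar U$ and the discrete one \eqref{eq:optim_cond_h} with $\mathcal P_\tau\bar u-\bar U$ and adding; the state--adjoint coupling then produces the term $\int_0^T[(S_\tau\bar U-\bar y)(t),(S_\tau(\mathcal P_\tau\bar u-\bar U))(t)]\,\mathrm{d}t$, which is rewritten as $-\ssnm{\bar y-S_\tau\bar U}_{L^2(0,T;H)}^2$ plus a controllable cross term, so the dangerous contribution enters with a favorable sign and is simply discarded. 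Exploiting this convexity structure (the standard two-optimality-conditions trick) is the missing idea; without it your estimate does not close.
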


\begin{remark} 
  Recently, Li and Xie \cite{LiXie2021} have analyzed a spatial semi-discretization
  for a stochastic linear quadratic control problem with general filtration.
  For a special case of problem \cref{eq:optim_h}, Li and Zhou \cite{LiZhou2020}
  obtained the temporal accuracy $ O(\tau^{1/2}) $ for rough data. For other
  related works, we refer the reader to \cite{Dunst2016,Wang2020a,Wang2020b,ZhouLi2021}.
\end{remark}

The main task of the rest of this subsection is to prove the above theorem. To
this end, we proceed as follows. For any $ v \in L_\mathbb F^2(0,T;H) $, we use
$ S_\tau v $ to denote the solution to discretization \cref{eq:state_h} with $ U
$ being replaced by $ v $. A routine argument (see, e.g.,
\cite[Theorem~3.14]{Krusebook2014}) gives
\begin{equation} 
  \label{eq:S0-stab}
  \max_{0 \leqslant j \leqslant J}
  \ssnm{(S_\tau v)_j}_{H}
  \leqslant c \nm{v}_{L^2(0,T;H)}.
\end{equation}
For any $ P,Z \in \mathcal X_\tau $ and $ g, v \in L_\mathbb F^2(0,T;H) $,
define
\begin{small}
\begin{equation} 
  \label{eq:scrS}
  \begin{aligned}
    \mathscr S(P,Z,g,v)& :=
    \sum_{j=0}^{J-1} \bigg(
      \int_{t_j}^{t_{j+1}}
      \big[
        (\alpha_1 P_{j+1} + \alpha_3 Z)(t), \, v(t)
      \big] \, \mathrm{d}t  {} \\
     & -\Big[
        \int_{t_j}^{t_{j+1}} (\alpha_0 P_{j+1} +
        g + \alpha_2 Z)(t) \, \mathrm{d}t, \,
        \int_{t_j}^{t_{j+1}} (\alpha_2 S_\tau v + \alpha_3 v)(t)  \, \mathrm{d}W(t)
      \Big]
    \bigg).
  \end{aligned}
\end{equation}
\end{small}
In the sequel we will always assume
\[
  \tau < \frac1{
    \nm{\alpha_2}_{L^\infty(\Omega \times (0,T))}^2
  },
\]
to ensure that the later discretizations \cref{eq:barph-barzh,eq:P-Z} each
admit a unique solution (see the proof of \cref{thm:conv1}). One form of the
first-order optimality condition of problem \cref{eq:optim_h} is as follows.
\begin{lemma} 
  Assume that $ \bar U $ is the solution to problem \cref{eq:optim_h}. Let $
  (\bar P, \bar Z) \in \mathcal X_\tau \times L_\mathbb F^2(0,T;H) $ be
  the solution to the discretization
  \begin{subequations} 
  \label{eq:barph-barzh}
  \begin{numcases}{}
    \bar P_J = 0, \\
    \bar P_j - \bar P_{j+1} = \tau A \bar P_j +
    \int_{t_j}^{t_{j+1}} \big(
      \alpha_0 \bar P_{j+1} +
      S_\tau \bar U - y_d + \alpha_2 \bar Z
    \big)(t) \, \mathrm{d}t \notag \\
    \qquad\qquad\qquad\qquad\qquad\qquad {} -
    \int_{t_j}^{t_{j+1}} \bar Z(t) \, \mathrm{d}W(t),
    \quad 0 \leqslant j < J.
  \end{numcases}
  \end{subequations}
  Then
  \begin{equation} 
    \label{eq:optim_cond_h}
    \nu \int_0^T [\bar U(t), U(t)] \, \mathrm{d}t +
    \mathscr S(\bar P, \bar Z, S_\tau \bar U - y_d, U) = 0
    \quad \forall U \in \mathcal X_\tau.
  \end{equation}
\end{lemma}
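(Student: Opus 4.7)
Because the state map $U \mapsto S_\tau U$ is affine-linear (in fact linear, since $Y_0=0$ by \cref{eq:state_h}), the reduced cost
\[
j(U) := \tfrac12 \ssnm{S_\tau U - y_d}_{L^2(0,T;H)}^2 + \tfrac{\nu}2 \ssnm{U}_{L^2(0,T;H)}^2
\]
is a strictly convex coercive quadratic on $\mathcal X_\tau$, so $\bar U$ is its unique minimizer iff the Gateaux derivative vanishes in every direction $U \in \mathcal X_\tau$. This directional derivative is readily computed as
\[
\int_0^T [(S_\tau \bar U - y_d)(t),(S_\tau U)(t)]\,\mathrm{d}t + \nu \int_0^T [\bar U(t),U(t)]\,\mathrm{d}t = 0. \tag{$\star$}
\]
Hence the lemma reduces to establishing the duality identity
\[
\int_0^T [(S_\tau \bar U - y_d)(t),(S_\tau U)(t)]\,\mathrm{d}t = \mathscr S(\bar P,\bar Z, S_\tau \bar U - y_d, U) \tag{$\star\star$}
\]
for every $U \in \mathcal X_\tau$; combining $(\star)$ and $(\star\star)$ gives \cref{eq:optim_cond_h}.

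The natural vehicle for $(\star\star)$ is a discrete-in-time It\^o / summation-by-parts formula applied to $[Y_j,\bar P_j]$ with $Y := S_\tau U$. Since $Y_0 = 0$ and $\bar P_J = 0$, the telescoping sum vanishes:
\[
0 = \sum_{j=0}^{J-1}\big([Y_{j+1},\bar P_{j+1}] - [Y_j,\bar P_j]\big) = \sum_{j=0}^{J-1}\big([Y_{j+1}-Y_j,\bar P_{j+1}] + [Y_j,\bar P_{j+1}-\bar P_j]\big).
\]
I would substitute the forward equation \cref{eq:state_h} into the first bracket and the backward equation \cref{eq:barph-barzh} into the second, and then simplify term by term. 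The elliptic contributions $\sum_j \tau[AY_{j+1},\bar P_{j+1}]$ and $-\sum_j \tau[Y_j,A\bar P_j]$ cancel by self-adjointness of $A$ together with the boundary conditions $Y_0=0$, $\bar P_J=0$. The drift terms carrying $\alpha_0$ cancel against each other because $Y$ is piecewise constant with $Y(t)=Y_j$ on $[t_j,t_{j+1})$. The term $\sum_j[Y_j,\int_{t_j}^{t_{j+1}}\bar Z\,\mathrm{d}W]$ vanishes by $\mathcal F_{t_j}$-measurability of $Y_j$ and the martingale property.

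The only nontrivial piece is $\sum_j[\int_{t_j}^{t_{j+1}}(\alpha_2 Y+\alpha_3 U)\,\mathrm{d}W,\bar P_{j+1}]$. Here I would split $\bar P_{j+1} = \mathbb E_{t_j}\bar P_{j+1} + (\bar P_{j+1} - \mathbb E_{t_j}\bar P_{j+1})$; the first half contributes nothing (martingale orthogonality), and for the second I would use \cref{eq:barph-barzh} to write
\[
\bar P_{j+1} - \mathbb E_{t_j}\bar P_{j+1} = -(I-\mathbb E_{t_j})\!\int_{t_j}^{t_{j+1}}\!(\alpha_0\bar P_{j+1} + S_\tau\bar U - y_d + \alpha_2\bar Z)\,\mathrm{d}t + \int_{t_j}^{t_{j+1}}\!\bar Z\,\mathrm{d}W.
\]
The It\^o isometry on the $\mathrm{d}W$-$\mathrm{d}W$ pairing yields the term $\int_0^T[(\alpha_2 Y+\alpha_3 U)(t),\bar Z(t)]\,\mathrm{d}t$, whose part involving $\alpha_2 Y$ exactly cancels the $[Y,\alpha_2\bar Z]$ contribution already produced by the $\alpha_2\bar Z$ drift, while the part involving $\alpha_3 U$ combines with the $\alpha_1\bar P_{j+1}$ contribution to form the first summand of $\mathscr S$. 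The remaining $\mathrm{d}W$-$\mathrm{d}t$ pairing is exactly the second summand of $\mathscr S$, and the drift $(S_\tau\bar U - y_d)$ produces $-\int_0^T[Y,S_\tau\bar U-y_d]\,\mathrm{d}t$, which, moved to the other side, gives $(\star\star)$.

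The main obstacle is precisely the bookkeeping in this last step: correctly pairing the forward-noise and backward-noise increments, distinguishing the $\mathcal F_{t_j}$- and non-$\mathcal F_{t_j}$-measurable portions of $\bar P_{j+1}$, and verifying that the $\alpha_2$ cross-terms cancel so that only the four summands defining $\mathscr S$ survive. Once $(\star\star)$ is secured, the conclusion is immediate from $(\star)$.
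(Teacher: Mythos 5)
Your proposal is correct and follows essentially the same route as the paper: the paper also reduces the lemma to the duality identity \cref{eq:foo} (your $(\star\star)$, stated there for all $v\in L_\mathbb F^2(0,T;H)$) and then obtains \cref{eq:optim_cond_h} from the first-order condition for the convex reduced cost. The only difference is that the paper imports the duality identity by citing \cite[Lemma 4.19]{LiZhou2020}, whereas you sketch its derivation via the discrete summation-by-parts/It\^o argument, and your bookkeeping (cancellation of the elliptic and $\alpha_0$ terms, the splitting $\bar P_{j+1}=\mathbb E_{t_j}\bar P_{j+1}+(I-\mathbb E_{t_j})\bar P_{j+1}$, and the It\^o-isometry cross terms) checks out.
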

\begin{proof} 
  Following the proof of \cite[Lemma 4.19] {LiZhou2020}, we can easily obtain
  \begin{equation} 
    \label{eq:foo}
    \int_0^T \big[ (S_\tau  \bar U - y_d)(t), (S_\tau v)(t) \big]
    \, \mathrm{d}t = \mathscr S(\bar P, \bar Z, S_\tau  \bar U - y_d, v)
  \end{equation}
  for all $ v \in L_\mathbb F^2(0,T;H) $. By this equality, a straightforward
  calculation yields \cref{eq:optim_cond_h}.
\end{proof}
\begin{remark} 
  Note that \cref{eq:barph-barzh} is not a natural adjoint equation of the
  discrete state equation \cref{eq:state_h}, and hence the first-order
  optimality condition \cref{eq:optim_cond_h} is unusual. We can also use the
  temporal semi-discretizations \cref{eq:discretization1,eq:discretization2} to
  form the first-order optimality condition of problem \cref{eq:optim_cond_h};
  however, we observe that the temporal semi-discretization
  \cref{eq:discretization3} appears to be more suitable for the numerical
  analysis of problem \cref{eq:optim_h}.
\end{remark}

\begin{lemma}
  Let $ \bar u $ be the solution to \cref{eq:optim}, and let $ \bar y $ be the state
  with respect to $ \bar u $. Then
  \begin{equation}
    \label{eq:I1-esti}
    \ssnm{\bar y - S_\tau \bar u}_{L^2(0,T;H)}
    \leqslant c\tau^{1/2}.
  \end{equation}
\end{lemma}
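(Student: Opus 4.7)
The plan is to carry out a Duhamel-type error analysis, analogous to but simpler than the proof of \cref{lem:lin-conv} and in the forward direction. Since $\bar u$ is fixed, I would first derive a nodal estimate $\max_j \ssnm{\bar y(t_j) - (S_\tau \bar u)_j}_H \leqslant c\tau^{1/2}$ and then promote it to the $L^2(0,T;H)$-bound.

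First, I would iterate the implicit scheme \cref{eq:state_h} applied to $\bar u$ to obtain the discrete Duhamel formula
\[
  (S_\tau \bar u)_j = \sum_{k=0}^{j-1}(I-\tau A)^{-(j-k)} \Big(
    \int_{t_k}^{t_{k+1}} (\alpha_0 (S_\tau \bar u)_k + \alpha_1 \bar u)(t)\,\mathrm{d}t +
    \int_{t_k}^{t_{k+1}} (\alpha_2 (S_\tau \bar u)_k + \alpha_3 \bar u)(t)\,\mathrm{d}W(t)
  \Big),
\]
and compare it with the mild formula for $\bar y(t_j)$, which has the same structure with $e^{(t_j-t)A}$ replacing $(I-\tau A)^{-(j-k)}$ and $\bar y(t)$ replacing $(S_\tau \bar u)_k$. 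Setting $e_j := \bar y(t_j) - (S_\tau \bar u)_j$, I would decompose $e_j$ into four terms: the deterministic and stochastic consistency errors between $e^{(t_j-t)A}$ and $(I-\tau A)^{-(j-k)}$ acting on the data $\alpha_0 \bar y + \alpha_1 \bar u$ and $\alpha_2 \bar y + \alpha_3 \bar u$, and the two transport terms in which the discrete resolvent acts on $\bar y(t) - (S_\tau \bar u)_k$.

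The two consistency terms are bounded by $c \tau^{1/2}$ using the forward analog of \cref{lem:auxi-1} (whose proof does not depend on time direction): Cauchy--Schwarz handles the deterministic one, and It\^o's isometry the stochastic one, with the data-bound $\ssnm{\alpha_i \bar y}_{L^2(0,T;H)} + \ssnm{\alpha_i \bar u}_{L^2(0,T;H)} < \infty$ coming from \cref{eq:optim_regu} and the essential boundedness of the $\alpha_i$. For the transport terms I would write $\bar y(t) - (S_\tau \bar u)_k = (\bar y(t) - \bar y(t_k)) + e_k$; applying It\^o's formula to \cref{eq:state} together with $\bar y \in L_\mathbb F^2(0,T;H^1)$ gives the time-regularity estimate
\[
  \sum_{k=0}^{J-1} \int_{t_k}^{t_{k+1}} \ssnm{\bar y(t) - \bar y(t_k)}_H^2 \, \mathrm{d}t \leqslant c\tau,
\]
which handles the first piece, while the contractivity $\nm{(I-\tau A)^{-1}}_{\mathcal L(H)} \leqslant 1$ together with It\^o's isometry turns the $e_k$ piece into a term of the form $c\sum_{k=0}^{j-1} \tau \ssnm{e_k}_H^2$.

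Collecting the four bounds gives $\ssnm{e_j}_H^2 \leqslant c\tau + c \sum_{k=0}^{j-1} \tau \ssnm{e_k}_H^2$, and the discrete Gronwall inequality then yields $\max_{0 \leqslant j \leqslant J} \ssnm{e_j}_H \leqslant c\tau^{1/2}$. The claim \cref{eq:I1-esti} finally follows from
\[
  \ssnm{\bar y - S_\tau \bar u}_{L^2(0,T;H)}^2 \leqslant 2\sum_{j=0}^{J-1} \int_{t_j}^{t_{j+1}} \ssnm{\bar y(t) - \bar y(t_j)}_H^2 \, \mathrm{d}t + 2\tau \sum_{j=0}^{J-1} \ssnm{e_j}_H^2 \leqslant c\tau.
\]
The main technical hurdle, as in \cref{lem:lin-conv}, is aligning the stochastic-integral pieces so that It\^o's isometry converts the operator bound of \cref{lem:auxi-1} into the desired $\tau^{1/2}$ rate without losing any power of $\tau$ to the time-dependent, random coefficients $\alpha_i$; once the measurability and boundedness of the $\alpha_i$ are tracked carefully, the argument goes through.
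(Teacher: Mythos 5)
Your overall decomposition is exactly the paper's: split $\bar y - S_\tau\bar u$ on $[t_j,t_{j+1})$ into the time increment $\bar y - \bar y(t_j)$ and the nodal error $\bar y(t_j) - (S_\tau\bar u)_j$, bound the first by $c\tau^{1/2}$ in $\ell^2(L^2)$ via the strong form of \cref{eq:state} and $\bar y \in L_\mathbb F^2(0,T;H^1)$ from \cref{eq:optim_regu}, and bound the second by $c\tau^{1/2}$ uniformly in $j$. The only divergence is that the paper simply imports the nodal estimate from \cite[Theorem~3.14]{Krusebook2014}, whereas you reprove it via a discrete Duhamel comparison; your treatment of the time-increment term and the final assembly coincide with the paper's.

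In your reproof of the nodal estimate there is one step that, as written, does not go through: you claim that It\^o's isometry plus the forward analogue of \cref{lem:auxi-1} handles the stochastic consistency term. But \cref{lem:auxi-1} bounds the \emph{time-integrated} quantity $\sum_k\int_{t_k}^{t_{k+1}}\nm{e^{(t_j-t)A}-(I-\tau A)^{-(j-k)}}_{\mathcal L(H)}^2\,\mathrm{d}t$, while It\^o's isometry produces the weighted integral $\sum_k\int_{t_k}^{t_{k+1}}\ssnm{(e^{(t_j-t)A}-(I-\tau A)^{-(j-k)})g(t)}_H^2\,\mathrm{d}t$ with $g=\alpha_2\bar y+\alpha_3\bar u$. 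You cannot pull the operator norm out of this integral unless it is small \emph{uniformly} in $t$, and it is not: by \cref{eq:I-tauA-conv} with $\beta=0$ the $\mathcal L(H)$-bound near $k=j-1$ is only $O(1)$. (For the deterministic consistency term Cauchy--Schwarz separates the two factors, which is why the integrated bound suffices there; that is precisely how \cref{lem:auxi-1} is used in the proof of \cref{lem:lin-conv}.) The fix is to measure the consistency operator in $\mathcal L(H^{1/2},H)$, where \cref{eq:I-tauA-conv} and \cref{eq:I-etA} give a bound $c\tau^{1/2}$ uniform in $t$ and $k$, and to use that the diffusion integrand actually lies in $L_\mathbb F^2(0,T;H^{1/2})$ — which follows from \cref{eq:optim_regu} and $\bar u\in L_\mathbb F^2(0,T;H^{1/2})$. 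With that modification (and with the transport terms handled by contractivity of $(I-\tau A)^{-1}$ and discrete Gronwall as you describe) your argument closes; this extra spatial regularity is also, in essence, what the hypotheses of Kruse's theorem encode.
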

\begin{proof}
  Fix $ 0 \leqslant j < J $. By definition we have
  \[
    \mathrm{d}\bar y(t) = (A \bar y + \alpha_0 \bar y + \alpha_1 \bar u)(t)
    \, \mathrm{d}t + (\alpha_2 \bar y + \alpha_3 \bar u)(t) \, \mathrm{d}W(t),
    \quad 0 \leqslant t \leqslant T,
  \]
  so that
  \[
    \bar y(t) - \bar y(t_j) = \int_{t_j}^t (A \bar y + \alpha_0\bar y + \alpha_1 \bar u)(t)
    \, \mathrm{d}t + \int_{t_j}^t (\alpha_2 \bar y + \alpha_3 \bar u)(t) \,
    \mathrm{d}W(t), \quad t_j \leqslant t \leqslant T.
  \]
  It follows that for any $ t_j \leqslant t \leqslant t_{j+1} $,
  \begin{align*}
    & \ssnmb{\bar y(t) - \bar y(t_j)}_H^2 \\
    \leqslant{} &
    2 \ssnmB{
      \int_{t_j}^t (A\bar y + \alpha_0 \bar y + \alpha_1 \bar u)(t) \, \mathrm{d}t
    }_H^2 + 2 \ssnmB{
      \int_{t_j}^t (\alpha_2 \bar y + \alpha_3 \bar u)(t) \, \mathrm{d}W(t)
    }_H^2 \\
    ={} &
    2 \ssnmB{
      \int_{t_j}^t (A\bar y + \alpha_0 \bar y + \alpha_1 \bar u)(t) \, \mathrm{d}t
    }_H^2 + 2 \int_{t_j}^t
    \ssnm{(\alpha_2 \bar y + \alpha_3 \bar u)(t)}_H^2 \, \mathrm{d}t \\
    \leqslant{} &
    2(t-t_j) \int_{t_j}^t \ssnm{
      (A\bar y + \alpha_0 \bar y + \alpha_1 \bar u)(t)
    }_H^2 \, \mathrm{d}t + 2 \int_{t_j}^t
    \ssnm{(\alpha_2 \bar y + \alpha_3 \bar u)(t)}_H^2 \, \mathrm{d}t,
  \end{align*}
  which implies
  \begin{align*}
    & \ssnm{\bar y - \bar y(t_j)}_{L^2(t_j,t_{j+1};H)}^2 \\
    \leqslant{} &
    \tau^2 \ssnm{A\bar y + \alpha_0\bar y + \alpha_1\bar u}_{L^2(t_j,t_{j+1};H)} ^2 +
    2\tau \ssnm{\alpha_2 \bar y + \alpha_3 \bar u}_{L^2(t_j,t_{j+1};H)}^2.
  \end{align*}
  Hence,
  \begin{align*}
    & \sum_{j=0}^{J-1} \ssnm{\bar y - \bar y(t_j)}_{L^2(t_j,t_{j+1};H)}^2 \\
    \leqslant{} &
    \tau^2 \ssnm{A\bar y + \alpha_0\bar y + \alpha_1\bar u}_{L^2(0,T;H)} ^2 +
    2\tau \ssnm{\alpha_2 \bar y + \alpha_3 \bar u}_{L^2(0,T;H)}^2.
  \end{align*}
  By \cref{eq:optim_regu} and the fact $ \bar u \in L_\mathbb F^2(0,T;H) $, we
  then obtain
  \[ 
    \sum_{j=0}^{J-1} \ssnm{\bar y - \bar y(t_j)}_{L^2(t_j,t_{j+1};H)}^2
    \leqslant c \tau,
  \]
  so that the desired estimate \cref{eq:I1-esti} follows from
  \[ 
    \max_{0 \leqslant j < J} \ssnm{\bar y(t_j) - (S_\tau \bar u)_j}_{H}
    \leqslant c \tau^{1/2} \quad
    \text{(see \cite[Theorem~3.14]{Krusebook2014}).}
  \]
  This completes the proof.
\end{proof}

Finally, we are in a position to prove \cref{thm:optim-conv} as follows.

\medskip\noindent{\bf Proof of \cref{thm:optim-conv}}. Let $ \bar y $ be the
state with respect to the control $ \bar u $, and let $ (\bar p, \bar z) $ be
the solution to equation \cref{eq:barp-barz}. Similar to \cref{eq:lin2-p-pj}, we
have
\begin{equation} 
  \label{eq:barp-barpj}
  \Big(
    \sum_{j=0}^{J-1} \ssnm{\bar p - \bar p(t_{j+1})}_{
      L^2(t_j,t_{j+1}; H)
    }^2
  \Big)^{1/2} \leqslant c \tau^{1/2}.
\end{equation}
We divide the rest of the proof into the following four steps.

{\it Step 1}. Let $ (P, Z) \in \mathcal X_\tau \times L_\mathbb F^2(0,T; H) $
be the solution to the discretization
\begin{equation}
  \label{eq:P-Z}
  \begin{cases}
    P_J = 0, \\
    P_j -  P_{j+1} = \tau A  P_j +
    \int_{t_j}^{t_{j+1}} \big(
      \alpha_0  P_{j+1} + \bar y - y_d + \alpha_2 Z
    \big)(t) \, \mathrm{d}t \\
    \qquad\qquad\qquad\qquad\qquad\qquad\qquad{} -
    \int_{t_j}^{t_{j+1}}  Z(t) \, \mathrm{d}W(t),
    \quad 0 \leqslant j < J.
  \end{cases}
\end{equation}
In view of \cref{eq:optim_regu} and the fact $ y_d \in L_\mathbb F^2(0,T;
H) $, we can use \cref{thm:conv3} to conclude that
\begin{equation} 
  \label{eq:P-Z-conv}
  \max_{0 \leqslant j \leqslant J}
  \ssnm{\bar p(t_j) - P_j}_{H} +
  \ssnm{\bar z - Z}_{L^2(0,T;H)}
  \leqslant c \tau^{1/2},
\end{equation}
which, together with \cref{eq:barp-barpj}, yields
\begin{small}
  \begin{align}  \label{eq:barp-P}
    & \Big(
      \sum_{j=0}^{J-1} \ssnm{\bar p - P_{j+1}}_{
        L^2(t_j,t_{j+1}; H)
      }^2
    \Big)^{1/2} \notag \\
    \leqslant{} &
    \Big(
      \sum_{j=0}^{J-1} \ssnm{\bar p - \bar p(t_{j+1})}_{
        L^2(t_j,t_{j+1}; H)
      }^2
    \Big)^{1/2} + \Big(
      \sum_{j=0}^{J-1} \ssnm{P_{j+1} - \bar p(t_{j+1})}_{
        L^2(t_j,t_{j+1}; H)
      }^2
    \Big)^{1/2} \notag \\
      \leqslant{} &
      c \tau^{1/2}.
    \end{align}
    \end{small}
    In addition, from \cref{eq:P-Z-conv} and \cref{eq:barp-barz-regu} we conclude that
    \begin{equation} 
      \label{eq:P-Z-stab}
      \Big(
        \tau \sum_{j=0}^{J-1} \ssnm{P_{j+1}}_{H}^2
      \Big)^{1/2} + \ssnm{Z}_{L^2(0,T;H)}
      \leqslant c.
    \end{equation}

    {\it Step 2}. Let us prove
    \begin{equation}
      \label{eq:u-U-I1-I2}
      \nu \ssnm{\bar u - \bar U}_{L^2(0,T;H)}^2
      \leqslant c\tau + c\ssnm{(I - \mathcal P_\tau)\bar u}_{ L^2(0,T;H) }^2
      + \mathbb I_1 + \mathbb I_2 + \mathbb I_3 + \mathbb I_4,
    \end{equation}
    where
    \begin{small}
      \begin{align*} 
        \mathbb I_1
        & := \sum_{j=0}^{J-1} \int_{t_j}^{t_{j+1}}
        \big[
          \alpha_1(t)(P_{j+1} - \bar p(t)), (\mathcal P_\tau\bar u - \bar U)(t)
        \big] \, \mathrm{d}t, \\
        \mathbb I_2
        &:= \int_0^T \big[
          (\alpha_3 Z -  \alpha_3 \bar z)(t),
          (\mathcal P_\tau\bar u - \bar U)(t)
        \big] \,
        \mathrm{d}t, \\
        \mathbb I_3
        &:= -\sum_{j=0}^{J-1} \Big[
          \int_{t_j}^{t_{j+1}} \! \big(
            \alpha_0 P_{j+1} \!+\! \bar y \!-\! y_d \!+\! \alpha_2 Z
          \big)(t) \mathrm{d}t,
          \int_{t_j}^{t_{j+1}} \! \big(
            \alpha_2 S_\tau (\mathcal P_\tau\bar u \!-\! \bar U)
            \!+\! \alpha_3(\mathcal P_\tau\bar u \!-\! \bar U)
          \big)(t) \mathrm{d}W(t)
        \Big], \\
        \mathbb I_4 &:=
        -\int_0^T \Big[ (\alpha_1\bar p + \alpha_3 \bar z)(t),
          (\bar u-\mathcal P_\tau\bar u)(t)
        \Big]
        \, \mathrm{d}t.
      \end{align*}
    \end{small}
    \hskip -4pt The basic idea is standard (see, e.g.,
    \cite[Theore~3.4]{Hinze2009}). We first present three equalities. Inserting $
    v := \mathcal P_\tau\bar u - \bar U $ into \cref{eq:foo} gives
    \begin{equation} 
      \label{eq:230}
      \int_0^T \big[
        (S_\tau \bar U - y_d)(t), \,
        (S_\tau (\mathcal P_\tau\bar u - \bar U))(t)
      \big] \, \mathrm{d}t =
      \mathscr S(\bar P, \bar Z, S_\tau \bar U - y_d, \mathcal P_\tau\bar u - \bar U),
    \end{equation}
    and similarly we have
    \begin{equation} 
      \label{eq:231}
      \int_0^T \big[
        (\bar y-y_d)(t), \,
        (S_\tau (\mathcal P_\tau\bar u-\bar U))(t)
      \big] \, \mathrm{d}t =
      \mathscr S(P, Z, \bar y - y_d, \mathcal P_\tau\bar u - \bar U).
    \end{equation}
    By definition, it is easily verified that
    \begin{equation} 
      \label{eq:232}
      \begin{aligned}
      & \mathscr S(P, Z, \bar y - y_d, \mathcal P_\tau\bar u - \bar U) -
      \int_0^T \big[
        (\alpha_1 \bar p + \alpha_3 \bar z)(t), \,
        (\bar u - \bar U)(t)
      \big] \, \mathrm{d}t \\
        ={} &
        \mathbb I_1 + \mathbb I_2 + \mathbb I_3 + \mathbb I_4.
      \end{aligned}
    \end{equation}
    Next, by \cref{eq:optim_cond} we have
    \[ 
      \nu \int_0^T [\bar u(t), (\bar u - \bar U)(t)] \, \mathrm{d}t =
      - \int_0^T \big[
        (\alpha_1\bar p + \alpha_3 \bar z)(t),
        (\bar u - \bar U)(t)
      \big] \, \mathrm{d}t,
    \]
    and inserting $ U := \mathcal P_\tau\bar u - \bar U $ into
    \cref{eq:optim_cond_h} gives
    \begin{equation} 
      -\nu \int_0^T \big[ \bar U(t), \, (\bar u - \bar U)(t) \big] \, \mathrm{d}t
      = \mathscr S(\bar P, \bar Z, S_\tau \bar U - y_d, \mathcal P_\tau\bar u - \bar U).
    \end{equation}
    Summing up the above two equalities yields
    \begin{align*} 
    & \nu \ssnm{\bar u - \bar U}_{L^2(0,T;H)}^2 \\
      ={} &
      - \int_0^T \big[
        (\alpha_1 \bar p + \alpha_3 \bar z)(t), \,
        (\bar u - \bar U)(t)
      \big] \, \mathrm{d}t +
      \mathscr S(\bar P, \bar Z, S_\tau \bar U - y_d, \mathcal P_\tau\bar u - \bar U) \\
      ={} &
      \mathscr S(P, Z, \bar y - y_d, \mathcal P_\tau\bar u - \bar U) -
      \int_0^T \big[
        (\alpha_1 \bar p + \alpha_3 \bar z)(t), \,
        (\bar u - \bar U)(t)
      \big] \, \mathrm{d}t  {} \\
          & \quad +\mathscr S(\bar P, \bar Z, S_\tau \bar U - y_d, \mathcal P_\tau\bar u - \bar U) -
          \mathscr S(P,Z,\bar y - y_d, \mathcal P_\tau\bar u - \bar U) \\
      ={} &
      \mathbb I_1 + \mathbb I_2 + \mathbb I_3 + \mathbb I_4 +
      \mathscr S(\bar P, \bar Z, S_\tau \bar U - y_d, \mathcal P_\tau\bar u - \bar U) -
      \mathscr S(P,Z,\bar y - y_d, \mathcal P_\tau\bar u - \bar U)
      \quad \text{(by \cref{eq:232})} \\
      ={} &
      \mathbb I_1 + \mathbb I_2 + \mathbb I_3 + \mathbb I_4 +
      \int_0^T \big[
        (S_\tau \bar U - \bar y)(t), (S_\tau(\mathcal P_\tau \bar u - \bar U))(t)
      \big] \, \mathrm{d}t \qquad\text{(by \cref{eq:230,eq:231})}.
    \end{align*}
    Hence, the desired estimate \cref{eq:u-U-I1-I2} follows from
    \begin{align*} 
    & \int_0^T \big[
      (S_\tau \bar U - \bar y)(t), \,
      (S_\tau (\mathcal P_\tau\bar u - \bar U))(t)
    \big] \, \mathrm{d}t \\
      ={} &
      -\ssnm{\bar y - S_\tau \bar U}_{L^2(0,T;H)}^2 +
      \int_0^T \big[
        (S_\tau \bar U - \bar y)(t),
        (S_\tau\mathcal P_\tau\bar u - \bar y)(t)
      \big] \, \mathrm{d}t \\
      \leqslant{} &
      -\frac12 \ssnm{\bar y - S_\tau \bar U}_{L^2(0,T;H)}^2 +
      \frac12 \ssnm{\bar y - S_\tau\mathcal P_\tau\bar u}_{L^2(0,T;H)}^2 \\
      \leqslant{} &
      \frac12 \ssnm{\bar y - S_\tau\mathcal P_\tau\bar u}_{L^2(0,T;H)}^2 \\
      \leqslant{} &
      \ssnm{\bar y - S_\tau\bar u}_{L^2(0,T;H)}^2 +
      \ssnm{S_\tau(I-\mathcal P_\tau)\bar u}_{L^2(0,T;H)}^2 \\
      \leqslant{} &
      c \tau + c \ssnm{(I-\mathcal P_\tau)\bar u}_{L^2(0,T;H)}^2
      \quad \text{(by \cref{eq:I1-esti,eq:S0-stab})}.
    \end{align*}

    {\it Step 3}. Let us estimate $ \mathbb I_1 $, $ \mathbb I_2 $, $ \mathbb I_3
    $ and $ \mathbb I_4 $. For $ \mathbb I_1, $ by \cref{eq:barp-P} we have
    \begin{align*}
      \mathbb I_1
    & \leqslant
    c \Big(
      \sum_{j=0}^{J-1} \ssnm{\bar p - P_{j+1}}_{
        L^2(t_j,t_{j+1};H)
      }^2
    \Big)^{1/2} \ssnm{\mathcal P_\tau\bar u - \bar U}_{L^2(0,T;H)} \\
      &\leqslant c \tau^{1/2} \ssnm{\mathcal P_\tau\bar u - \bar U}_{L^2(0,T;H)}.
  \end{align*}
  For $ \mathbb I_2 $ we have
  \begin{align*}
    \mathbb I_2
    &\leqslant
    c \ssnm{\bar z - Z}_{L^2(0,T;H)}
    \ssnm{\mathcal P_\tau\bar u - \bar U}_{L^2(0,T;H)} \\
    &\leqslant c \tau^{1/2} \ssnm{\mathcal P_\tau\bar u - \bar U}_{L^2(0,T;H)}
    \quad\text{(by \cref{eq:P-Z-conv})}.
  \end{align*}
  For $ \mathbb I_3 $ we have
  \begin{small}
    \begin{align*} 
      \mathbb I_3
      & \leqslant
      \sum_{j=0}^{J-1} \! \ssnmB{\!
      \int_{t_j}^{t_{j+1}} \!\! (\alpha_0 P_{j+1} \!+\!
      \bar y \!-\! y_d \!+\! \alpha_2 Z)(t) \mathrm{d}t
      }_{H} \ssnmB{\!
      \int_{t_j}^{t_{j+1}} \!\! \big(\!
      \alpha_2S_\tau (\mathcal P_\tau\bar u \!-\! \bar U) +
      \alpha_3 (\mathcal P_\tau\bar u \!-\! \bar U) \!
      \big)(t)  \mathrm{d}W(t)
      }_{H} \\
      & = \sum_{j=0}^{J-1} \ssnmB{
        \int_{t_j}^{t_{j+1}} (\alpha_0 P_{j+1} \!+\!
        \bar y \!-\! y_d \!+\! \alpha_2 Z)(t) \mathrm{d}t
      }_{H} \ssnm{
        \alpha_2S_\tau (\mathcal P_\tau\bar u - \bar U) \!+\! \alpha_3 (\mathcal
        P_\tau\bar u - \bar U)
      }_{L^2(t_j,t_{j+1};H)} \\
      & \leqslant \Big(
      \sum_{j=0}^{J-1}
      \ssnmB{
        \int_{t_j}^{t_{j+1}} \! (\alpha_0 P_{j+1} \!+\!
        \bar y \!-\! y_d \!+\! \alpha_2 Z)(t) \mathrm{d}t
      }_{H}^2
      \Big)^{1/2} \ssnm{
        \alpha_0 S_\tau (\mathcal P_\tau\bar u \!-\! \bar U) +
        \alpha_3(\mathcal P_\tau\bar u \!-\! \bar U)
      }_{L^2(0,T;H)} \\
      & \leqslant c\Big(
      \sum_{j=0}^{J-1}
      \ssnmB{
        \int_{t_j}^{t_{j+1}} (\alpha_0 P_{j+1} +
        \bar y - y_d + \alpha_2 Z)(t) \mathrm{d}t
      }_{H}^2
      \Big)^{1/2} \ssnm{\mathcal P_\tau\bar u - \bar U }_{L^2(0,T;H)}
      \quad \text{(by \cref{eq:S0-stab})} \\
      & \leqslant c \sqrt\tau \ssnm{\mathcal P_\tau\bar u - \bar U}_{L^2(0,T;H)},
    \end{align*}
  \end{small}
  since 
  \begin{small}
    \begin{align*} 
      & \Big(
      \sum_{j=0}^{J-1}
      \ssnmB{
        \int_{t_j}^{t_{j+1}} \big(
        \alpha_0 P_{j+1} + \bar y - y_d + \alpha_2 Z
        \big)(t) \, \mathrm{d}t
      }_{H}^2
      \Big)^{1/2} \\
      \leqslant{} &
      \Big(
      \sum_{j=0}^{J-1}
      \Big(
      \int_{t_j}^{t_{j+1}} \ssnm{
        (\alpha_0 P_{j+1} + \bar y - y_d + \alpha_2 Z)(t)
      }_{H} \, \mathrm{d}t
      \Big)^2
      \Big)^{1/2} \\
      \leqslant{} &
      \Big(
      \sum_{j=0}^{J-1}
      \tau \int_{t_j}^{t_{j+1}} \ssnm{
        (\alpha_0 P_{j+1} + \bar y - y_d + \alpha_2 Z)(t)
      }_{H}^2 \, \mathrm{d}t
      \Big)^{1/2} \\
      ={} &
      \sqrt\tau \Big(
      \sum_{j=0}^{J-1}
      \ssnm{\alpha_0 P_{j+1} + \bar y - y_d + \alpha_2 Z}_{
        L^2(t_j,t_{j+1};H)
      }^2
      \Big)^{1/2} \\
      \leqslant{} &
      c \sqrt\tau \quad \text{(by \cref{eq:P-Z-stab})}.
    \end{align*}
  \end{small}
  For $ \mathbb I_4 $, by \cref{eq:optim_cond} and the definition of $ \mathcal
  P_\tau $ we have
  \[ 
    \mathbb I_4 = \nu\int_0^T \big[
      \bar u(t), (\bar u - \mathcal P_\tau \bar u)(t)
      \big] \, \mathrm{d}t = \nu
    \ssnm{(I - \mathcal P_\tau)\bar u}_{L^2(0,T;H)}^2.
  \]

{\it Step 4}. Combining \cref{eq:u-U-I1-I2} and the above estimates of $ \mathbb
I_1 $, $ \mathbb I_2 $, $ \mathbb I_3 $ and $ \mathbb I_4 $ in Step 3, we conclude that
\begin{small}
\begin{align*}
  & \nu\ssnm{\bar u - \bar U}_{L^2(0,T;H)}^2 \\
  \leqslant{} &
  c\tau + c \ssnm{(I-\mathcal P_\tau)\bar u}_{L^2(0,T;H)}^2 +
  c \tau^{1/2} \ssnm{\mathcal P_\tau\bar u - \bar U}_{L^2(0,T;H)} \\
  \leqslant{} &
  c\tau + c \ssnm{(I-\mathcal P_\tau)\bar u}_{L^2(0,T;H)}^2 +
  c \tau^{1/2} \ssnm{(I - \mathcal P_\tau)\bar u}_{L^2(0,T;H)} +
  c\tau^{1/2}\ssnm{\bar u - \bar U}_{L^2(0,T;H)} \\
  \leqslant{} &
  c\tau + c \ssnm{(I-\mathcal P_\tau)\bar u}_{L^2(0,T;H)}^2 +
  c\tau^{1/2}\ssnm{\bar u - \bar U}_{L^2(0,T;H)}.
\end{align*}
\end{small}
We can then apply the Young's inequality with $ \varepsilon $ to obtain
\[
  \nu \ssnm{\bar u - \bar U}_{L^2(0,T;H)}^2
  \leqslant c \tau + c\ssnm{(I - \mathcal P_\tau)\bar u}_{L^2(0,T;H)}^2,
\]
which implies the desired estimate \cref{eq:optim-conv}. This completes the
proof of \cref{thm:optim-conv}.

\hfill\ensuremath{
  \vbox{\hrule height0.6pt\hbox{%
    \vrule height1.3ex width0.6pt\hskip0.8ex
    \vrule width0.6pt}\hrule height0.6pt
  }
}

\section{Conclusions}
\label{sec:conclusion} 
In this paper, we have analyzed three Euler type temporal semi-discretizations
for a backward semilinear stochastic evolution equation with Lipschitz
nonlinearity. With reasonable regularity assumptions on the data, we have
established the convergence for the first two semi-discretizations and derived
an explicit convergence rate for the third semi-discretization. In the numerical
analysis, no regularity restriction has been imposed on the solution, the
coefficient has not been necessarily deterministic, and the terminal value has
not been necessarily generated by a forward stochastic evolution equation. We
have applied the third temporal semi-discretization to a general stochastic
linear quadratic control problem and established the convergence for a
temporally semi-discrete approximation of the optimal control.


\begin{thebibliography}{10}

\bibitem{Beccari2019}
M.~Beccari, M.~Hutzenthaler, A.~Jentzen, R.~Kurniawan, F.~Lindner, and
  D.~Salimova.
\newblock {Strong and weak divergence of exponential and linear-implicit Euler
  approximations for stochastic partial differential equations with
  superlinearly growing nonlinearities}.
\newblock {\em arXiv:1903.06066}, 2019.

\bibitem{Bensoussan1983}
A.~Bensoussan.
\newblock {Stochastic maximum principle for distributed parameter systems}.
\newblock {\em J. Franklin Institute}, 315:387--406, 1983.

\bibitem{Bismut1973}
J.-M. Bismut.
\newblock {Conjugate convex functions in optimal stochastic control}.
\newblock {\em J. Math. Anal. Appl.}, 44:384--404, 1973.

\bibitem{Bouchard2004}
B.~Bouchard and N.~Touzi.
\newblock {Discrete-time approximation and Monte-Carlo simulation of backward
  stochastic differential equations}.
\newblock {\em Stoch. Process. Appl.}, 111:175--206, 2004.

\bibitem{Cao2017}
Y.~Cao, J.~Hong, and Z.~Liu.
\newblock {Approximating stochastic evolution equations with additive white and
  rough noises}.
\newblock {\em SIAM J. Numer. Anal.}, 55:1958--1981, 2017.

\bibitem{Chassagneux2014}
J.-F. Chassagneux.
\newblock {Linear multistep schemes for BSDEs}.
\newblock {\em SIAM J. Numer. Anal.}, 52:2815--2836, 2014.

\bibitem{Cui_Hong_2019}
J.~Cui and J.~Hong.
\newblock {Strong and weak convergence rates of a spatial approximation for
  stochastic partial differential equation with one-sided Lipschitz
  coefficient}.
\newblock {\em SIAM J. Numer. Anal.}, 57:1815--1841, 2019.

\bibitem{Du2002}
Q.~Du and T.~Zhang.
\newblock {Numerical approximation of some linear stochastic partial
  differential equations driven by special additive noises}.
\newblock {\em SIAM J. Numer. Anal.}, 40:1421--1445, 2002.

\bibitem{Dunst2016}
T.~Dunst and A.~Prohl.
\newblock {The forward-backward stochastic heat equation: numerical analysis
  and simulation}.
\newblock {\em SIAM J. Sci. Comput.}, 38:A2725--A2755, 2016.

\bibitem{Fuhrman2012}
M.~Fuhrman, Y.~Hu, and G.~Tessitore.
\newblock {Stochastic maximum principle for optimal control of SPDEs}.
\newblock {\em C. R. Acad. Sci. Paris, Ser. I}, 350:683--688, 2012.

\bibitem{Fuhrman2013}
M.~Fuhrman, Y.~Hu, and G.~Tessitore.
\newblock {Stochastic maximum principle for optimal control of SPDEs}.
\newblock {\em Appl. Math. Optim.}, 68:181--217, 2013.

\bibitem{Fuhrman2016}
M.~Fuhrman and C.~Orrieri.
\newblock {Stochastic maximum principle for optimal control of a class of
  nonlinear SPDEs with dissipative drift}.
\newblock {\em SIAM J. Control Optim.}, 54:341--371, 2016.

\bibitem{Fuhrman2002}
M.~Fuhrman and G.~Tessitore.
\newblock {Nonlinear Kolmogorov equations in infinite dimensional spaces: the
  backward stochastic differential equations approach and applications to
  optimal control}.
\newblock {\em Ann. Probab.}, 30:1397--1465, 2002.

\bibitem{Fuhrman2004}
M.~Fuhrman and G.~Tessitore.
\newblock {Infinite horizon backward stochastic differential equations and
  elliptic equations in Hilbert spaces}.
\newblock {\em Ann. Probab.}, 32:607--660, 2004.

\bibitem{Guatteri2011}
G.~Guatteri.
\newblock {Stochastic maximum principle for SPDEs with noise and control on the
  boundary}.
\newblock {\em Syst. Control Lett.}, 60:198--204, 2011.

\bibitem{Guatteri2013}
G.~Guatteri and F.~Masiero.
\newblock {On the existence of optimal controls for SPDEs with boundary noise
  and boundary control}.
\newblock {\em SIAM J. Control Optim.}, 51:1909--1939, 2013.

\bibitem{Guatteri2005}
G.~Guatteri and G.~Tessitore.
\newblock {On the backward stochastic Riccati equation in infinite dimensions}.
\newblock {\em SIAM J. Control Optim.}, 44:159--194, 2005.

\bibitem{Guatteri2014}
G.~Guatteri and G.~Tessitore.
\newblock {Well posedness of operator valued backward stochastic Riccati
  equations in infinite dimensional spaces}.
\newblock {\em SIAM J. Control Optim.}, 52:3776--3806, 2014.

\bibitem{Hinze2009}
M.~Hinze, R.~Pinnau, M.~Ulbrich, and S.~Ulbrich.
\newblock {\em {Optimization with PDE Constraints}}.
\newblock Springer, Netherlands, 2009.

\bibitem{Hu_Dualart_2011}
Y.~Hu, D.~Nualart, and X.~Song.
\newblock {Malliavin calculus for backward stochastic differential equations
  and application to numerical solutions}.
\newblock {\em Ann. Appl. Probab.}, 21:2379--2423, 2011.

\bibitem{Hu_Peng_1991}
Y.~Hu and S.~Peng.
\newblock {Adapted solution of a backward semilinear stochastic evolution
  equation}.
\newblock {\em Stoch. Anal. Appl.}, 9:445--459, 1991.

\bibitem{Hutzenthaler2015}
M.~Hutzenthaler and A.~Jentzen.
\newblock {\em {Numerical Approximations of Stochastic Differential Equations
  With Non-globally Lipschitz Continuous Coefficients}}.
\newblock Amer Mathematical Society, 2015.

\bibitem{Hutzenthaler_Jentzen2020}
M.~Hutzenthaler and A.~Jentzen.
\newblock {On a perturbation theory and on strong convergence rates for
  stochastic ordinary and partial differential equations with non-globally
  monotone coefficients}.
\newblock {\em Ann. Probab.}, 48:53--93, 2020.

\bibitem{Jentzen2009}
A.~Jentzen.
\newblock {Pathwise numerical approximation of SPDEs with additive noise under
  non-global Lipschitz coefficients}.
\newblock {\em Potential Anal.}, 31:375--404, 2009.

\bibitem{Jentzen2015}
A.~Jentzen and M.~R\"ockner.
\newblock {A Milstein scheme for SPDEs}.
\newblock {\em Found. Comput. Math.}, 15:313--362, 2015.

\bibitem{Karoui1997}
N.~El Karoui, S.~Peng, and M.~C. Quenez.
\newblock {Backward stochastic differential equations in finance}.
\newblock {\em Math. Financ.}, 7:1--71, 1997.

\bibitem{Krusebook2014}
R.~Kruse.
\newblock {\em {Strong and weak approximation of semilinear stochastic
  evolution equations}}.
\newblock Springer, Cham, Switzerland, 2014.

\bibitem{LiXie2021}
B.~Li and X.~Xie.
\newblock {Convergence of a spatial semi-discretization for a backward
  semilinear stochastic parabolic equation}.
\newblock {\em submitted}, arXiv:2105.10130,2021.

\bibitem{LiZhou2020}
B.~Li and Q.~Zhou.
\newblock {Discretization of a distributed optimal control problem with a
  stochastic parabolic equation driven by multiplicative noise}.
\newblock {\em J. Sci. Comput.}, 87, 2021.



\bibitem{LuZhangbook2021}
Q.~L\"u and X.~Zhang.
\newblock {\em {Mathematical control theory for stochastic partial differential
equations}}.
\newblock Springer, Cham, 2021.

\bibitem{Ma1994}
J.~Ma, P.~Protter, and J.M. Yong.
\newblock {Solving forward-backward stochastic differential equations
  explicitly---a four step scheme}.
\newblock {\em Probab. Theory Related Fields}, 98:339--359, 1994.

\bibitem{Ma_Yong1999}
J.~Ma and J.~Yong.
\newblock {\em {Forward-Backward Stochastic Differential Equations and Their
  Applications}}.
\newblock Springer, Berlin, 1999.

\bibitem{Pardoux_peng1990}
E.~Pardoux and S.~Peng.
\newblock {Adapted solution of a backward stochastic differential equation}.
\newblock {\em Syst. Control Lett.}, 14:55--61, 1990.

\bibitem{Pardoux2014}
  E.~Pardoux and A.~R\u{a}\c{s}canu.
\newblock {\em {Stochastic differential equations, backward SDEs, partial
  differential equations}}.
\newblock Springer, Cham, 2014.

\bibitem{Pazy1983}
A.~Pazy.
\newblock {\em {Semigroups of linear operators and applications to partial
  differential equations}}.
\newblock Springer, New York, 1983.

\bibitem{Peng1993}
S.~Peng.
\newblock {Backward stochastic differential equations and applications to
  optimal control}.
\newblock {\em Appl. Math. Optim.}, 27:125--144, 1993.

\bibitem{Wang2020b}
A.~Prohl and Y.~Wang.
\newblock {Strong error estimates for a space-time discretization of the linear
  quadratic control problem with the stochastic heat equation with linear
  noise}.
\newblock {\em arXiv:2012.04418v1}, 2020.

\bibitem{Wang2020a}
A.~Prohl and Y.~Wang.
\newblock {Strong rates of convergence for space-time discretization of the
  backward stochastic heat equation, and of a linear-quadratic control problem
  for the stochastic heat equation}.
\newblock {\em arXiv:2012.10117v1}, 2020.

\bibitem{Thomee2006}
V.~Thom\'{e}e.
\newblock {\em Galerkin Finite Element Methods for Parabolic Problems}.
\newblock Springer, Berlin, 2006.

\bibitem{WangY2016}
Y.~Wang.
\newblock {A semidiscrete Galerkin scheme for backward stochastic parabolic
  differential equations}.
\newblock {\em Math. Control Relat. Fields}, 6:489--515, 2016.

\bibitem{Yan2005}
Y.~Yan.
\newblock {Galerkin finite element methods for stochastic parabolic partial
  differential equations}.
\newblock {\em SIAM J. Numer. Anal.}, 43:1363--1384, 2005.

\bibitem{Yong1999}
J.~Yong and X.~Y. Zhou.
\newblock {\em {Stochastic Controls, Hamiltonian Systems and HJB Equations,
  Applications of Mathematics}}.
\newblock Springer, New York, 1999.

\bibitem{Zhang2004}
J.~Zhang.
\newblock {A numerical scheme for BSDEs}.
\newblock {\em Ann. Appl. Probab.}, 14:459--488, 2004.

\bibitem{Zhao2010}
W.~Zhao, G.~Zhang, and L.~Ju.
\newblock A stable multistep scheme for solving backward stochastic
  differential equations.
\newblock {\em SIAM J. Numer. Anal.}, 48(1):1369--1394, 2010.

\bibitem{ZhouLi2021}
Q.~Zhou and B.~Li.
\newblock {Numerical analysis of a Neumann boundary control problem with a
  stochastic parabolic equation}.
\newblock {\em submitted}, arXiv:2104.09443,2021.

\end{thebibliography}

\end{document}